\documentclass[11pt]{article}
%------------------------------------------------------------
% Packages and layout
%------------------------------------------------------------
\usepackage[letterpaper,twoside,outer=1.3in,vmargin=1.3in,]{geometry}
\usepackage{amsmath,amssymb,amsthm,amsfonts,enumerate,times,epsfig,color,hyperref,cleveref}

\addtolength{\jot}{-0.5em}

\newcommand{\ignore}[1]{}

%------------------------------------------------------------
% Notations
%------------------------------------------------------------

\DeclareMathOperator{\cone}{cone}

\DeclareMathOperator{\aff}{aff}

\DeclareMathOperator{\Diag}{Diag}

\newcommand{\rank}[1]{\mathrm{rank}\!\left(#1\right)}

\newcommand{\1}{\mathbf{1}}
\newcommand{\0}{\mathbf{0}}

\newcommand{\rvline}{\hspace*{-\arraycolsep}\vline\hspace*{-\arraycolsep}}

%------------------------------------------------------------
% Theorem environments
%------------------------------------------------------------
\newtheorem{theorem}{Theorem}[section]
\newtheorem{CO}[theorem]{Corollary}
\newtheorem{LE}[theorem]{Lemma}
\newtheorem{PR}[theorem]{Proposition}

\newtheorem{CN}[theorem]{Conjecture}
\newtheorem{RE}[theorem]{Remark}

\newcounter{claim_nb}[theorem]
\setcounter{claim_nb}{0}

\newtheorem*{claim*}{Claim}

\newcounter{claim_nbs}[section]
\setcounter{claim_nbs}{0}

\newcounter{subclaim_nb}[claim_nbs]
\setcounter{subclaim_nb}{0}

%------------------------------------------------------------
% Title and abstract
%------------------------------------------------------------
\title{Total dual dyadicness and dyadic generating sets}

\author{Ahmad Abdi \and
G\'erard Cornu\'ejols \and
Bertrand Guenin \and
Levent Tun\c{c}el
}
\begin{document}

\maketitle

\begin{abstract}
A vector is \emph{dyadic} if each of its entries is a dyadic rational number, i.e. of the form $\frac{a}{2^k}$ for some integers $a,k$ with $k\geq 0$. A linear system $Ax\leq b$ with integral data is \emph{totally dual dyadic} if whenever $\min\{b^\top y:A^\top y=w,y\geq \0\}$ for $w$ integral, has an optimal solution, it has a dyadic optimal solution. In this paper, we study total dual dyadicness, and give a co-NP characterization of it in terms of \emph{dyadic generating sets for cones and subspaces}, the former being the dyadic analogue of \emph{Hilbert bases}, and the latter a polynomial-time recognizable relaxation of the former. Along the way, we see some surprising turn of events when compared to total dual integrality, primarily led by the \emph{density} of the dyadic rationals. Our study ultimately leads to a better understanding of total dual integrality and polyhedral integrality. We see examples from dyadic matrices, $T$-joins, cycles, and perfect matchings of a graph. \end{abstract}

\section{Introduction}

A \emph{dyadic rational} is a number of the form $\frac{a}{2^k}$ for some integers $a,k$ where $k\geq 0$. The dyadic rationals are precisely the rational numbers with a finite binary representation, and are therefore relevant for (binary) floating-point arithmetic in numerical computations. Modern computers represent the rational numbers by fixed-size floating points, inevitably leading to error terms, which are compounded if serial arithmetic operations are performed such as in the case of mixed-integer linear, semidefinite, and more generally convex optimization. This has led to an effort to mitigate floating-point errors~\cite{Wei06} as well as the need for exact solvers~\cite{Cook11,Steffy11}. 

We address a different, though natural theoretical question: \emph{When does a linear program admit an optimal solution whose entries are dyadic rationals?} A vector is \emph{dyadic} if every entry is a dyadic rational. Consider the following primal dual pair of linear programs for $A\in \mathbb{Z}^{m\times n},b\in \mathbb{Z}^m$ and $w\in \mathbb{Z}^n$. $$
(P) \quad\max\{w^\top x:Ax\leq b\} \qquad (D) \quad\min\{b^\top y:A^\top y = w, y\geq \0\}.
$$ (${\bf 0}$ and ${\bf 1}$ denote respectively the all-zeros and all-ones column, or row, vectors of appropriate dimension.) When does (D) admit a dyadic optimal solution for all $w\in \mathbb{Z}^n$? How about (P)? Keeping close to the integral case, these questions lead to the notions of \emph{totally dual dyadic} systems and \emph{dyadic polyhedra}. In this paper, we reassure the reader that dyadic polyhedra enjoy a similar characterization as \emph{integral polyhedra}, but in studying totally dual dyadic systems, we see an intriguing and somewhat surprising turn of events when compared to \emph{totally dual integral (TDI)} systems~\cite{Edmonds77}. As such, we shall keep the focus of the paper on total dual dyadicness and its various characterizations. The characterizations lead to \emph{dyadic generating sets} for cones and subspaces, where the first notion is polyhedral and can be thought of as a dyadic analogue of \emph{Hilbert bases}, while the second notion is lattice-theoretic and new. We shall see some intriguing examples of totally dual dyadic systems and dyadic generating sets from Integer Programming, Combinatorial Optimization, and Graph Theory. Our study eventually leads to a better understanding of TDI systems and integral polyhedra.

Our characterizations extend easily to the \emph{$p$-adic rationals} for any prime number $p\geq 3$. For this reason, we shall prove our characterizations in the general setting. Interestingly, however, most of our examples do \emph{not} extend to the $p$-adic setting for $p\geq 3$.

\subsection{Totally dual $p$-adic systems and $p$-adic generating sets}

Let $p\geq 2$ be a prime number. A \emph{$p$-adic rational} is a number of the form $\frac{a}{p^k}$ for some integers $a,k$ where $k\geq 0$. A vector is \emph{$p$-adic} if every entry is a $p$-adic rational. Consider a linear system $Ax\leq b$ where $A\in \mathbb{Z}^{m\times n},b\in \mathbb{Z}^m$. We say that $Ax\leq b$ is \emph{totally dual $p$-adic} if for all $w\in \mathbb{Z}^n$ for which $\min\{b^\top y:A^\top y = w, y\geq \0\}$ has an optimum, it has a $p$-adic optimal solution. For $p=2$, we abbreviate `totally dual dyadic' as `TDD'. We prove the following characterization, which relies on two key notions defined afterwards.

\begin{theorem}[proved in \S\ref{sec:TDp}]\label{TDp-char} 
Let $A\in \mathbb{Z}^{m\times n},b\in \mathbb{Z}^m$ and
$P:=\{x:Ax\leq b\}$. Given a nonempty face $F$, denote by $A_Fx\leq b_F$ the subsystem of $Ax\leq b$ corresponding to the implicit equalities of $F$. Then the following statements are equivalent for every prime $p$: (1) $Ax\leq b$ is totally dual $p$-adic,
(2) for every nonempty face $F$ of $P$, the rows of $A_F$ form a $p$-adic generating set for a cone,
(3) for every nonempty face $F$ of $P$, the rows of $A_F$ form a $p$-adic generating set for a subspace.
\end{theorem}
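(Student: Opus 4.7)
The plan is to establish four implications: (1) $\Rightarrow$ (2), (2) $\Rightarrow$ (3), (3) $\Rightarrow$ (2), and (2) $\Rightarrow$ (1). Of these, the first, third, and fourth are routine linear programming arguments, while the nontrivial step is (3) $\Rightarrow$ (2), which will rest on the density of the $p$-adic rationals in $\mathbb{R}$.

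I would start with the routine directions. For (1) $\Rightarrow$ (2), fix a face $F$ and $w \in \cone(A_F) \cap \mathbb{Z}^n$, pick $\bar x$ in the relative interior of $F$, and note that any nonneg rational $y_F$ with $A_F^\top y_F = w$ (padded by zeros) is dual feasible with value $b_F^\top y_F = w^\top \bar x$, so $\bar x$ is primal optimal. TDD supplies a $p$-adic optimal dual $y^*$, and complementary slackness at $\bar x$ forces $\supp(y^*) \subseteq A_F$, yielding the desired nonneg $p$-adic combination for $w$. The reverse (2) $\Rightarrow$ (1) is essentially the same argument run backward: given $w \in \mathbb{Z}^n$ with dual opt, take a primal opt $\bar x$ and the smallest face $F$ containing it, so complementary slackness places $w$ in $\cone(A_F) \cap \mathbb{Z}^n$, and (2) returns a nonneg $p$-adic combination which, padded by zeros, is an optimal $p$-adic dual solution. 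For (2) $\Rightarrow$ (3), given $w \in \rspn(A_F) \cap \mathbb{Z}^n$, I write $w = A_F^\top \nu$ with $\nu$ rational, choose a large integer $M$ so that $\nu + M\1 \geq \0$, apply (2) to $w + M A_F^\top \1 \in \cone(A_F) \cap \mathbb{Z}^n$ to express it as $A_F^\top \lambda^+$ with $\lambda^+ \geq \0$ $p$-adic, and conclude that $w = A_F^\top(\lambda^+ - M\1)$ is a signed $p$-adic combination.

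The heart of the proof is (3) $\Rightarrow$ (2). Fix a face $F$ and $w \in \cone(A_F) \cap \mathbb{Z}^n$. The LP $\max\{w^\top x : Ax \leq b\}$ is feasible (since $F \neq \emptyset$) and bounded (the dual is feasible), so it attains its optimum on some face $F^* \supseteq F$. By the Goldman--Tucker strict complementarity theorem, there is a dual optimum $y^*$ whose support is exactly $A_{F^*}$, giving a strictly positive rational representation $\mu := y^*|_{F^*} > \0$ with $A_{F^*}^\top \mu = w$. Invoking (3) at $F^*$ furnishes a (possibly signed) $p$-adic vector $\lambda^0$ with $A_{F^*}^\top \lambda^0 = w$, so that $\mu - \lambda^0 \in \ker(A_{F^*}^\top)$, a rational subspace with an integer basis $v_1, \dots, v_k$. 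Writing $\mu - \lambda^0 = \sum_j \alpha_j v_j$ with $\alpha_j \in \mathbb{Q}$, I would approximate each $\alpha_j$ by a $p$-adic $\alpha_j'$ sufficiently close (possible since $\mathbb{Z}[1/p]$ is dense in $\mathbb{R}$), so that the vector $\lambda^* := \lambda^0 + \sum_j \alpha_j' v_j$ is $p$-adic, still satisfies $A_{F^*}^\top \lambda^* = w$, and lies as close as desired to $\mu > \0$; hence $\lambda^* > \0$. Padding $\lambda^*$ by zeros on $A_F \setminus A_{F^*}$ produces the required nonneg $p$-adic combination of rows of $A_F$ summing to $w$.

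The main obstacle is this last step: the signed $p$-adic combination from (3) can have negative entries, and the only tool for correcting them is perturbation within the kernel of $A_{F^*}^\top$, which is a priori a rational (not $p$-adic) subspace. Density of the $p$-adic rationals saves the argument, but only once we have a strictly positive rational target $\mu$ to approximate; otherwise a perturbation could push some coordinate slightly below zero. Securing such a $\mu$ is exactly what strict complementarity provides, at the cost of passing from the given face $F$ to the possibly larger optimal face $F^*$ with $A_{F^*} \subseteq A_F$, where the density perturbation can be executed safely.
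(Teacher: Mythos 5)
Your proof is correct and is essentially the paper's argument: the easy directions are handled by complementary slackness exactly as in the paper, and your key step (3) $\Rightarrow$ (2) uses the same two ingredients as the paper's (3) $\Rightarrow$ (1), namely strict complementarity to obtain a strictly positive dual solution supported on the optimal face, and density of the $p$-adic points in a rational affine subspace containing a $p$-adic point (which you re-derive inline rather than citing as the paper's Density Lemma). The only cosmetic difference is that you close the cycle via (3) $\Rightarrow$ (2) $\Rightarrow$ (1) instead of (3) $\Rightarrow$ (1) directly.
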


In fact, in (2), it suffices to consider only the minimal nonempty faces.

Let $\{a^1,\ldots,a^n\}\subseteq \mathbb{Z}^m$. The set $\{a^1,\ldots,a^n\}$ is a \emph{$p$-adic generating set for a cone ($p$-GSC)} if every integral vector in the conic hull of the vectors can be expressed as a $p$-adic conic combination of the vectors (meaning that the coefficients used are $p$-adic). In contrast, $\{a^1,\ldots,a^n\}$ is a \emph{$p$-adic generating set for a subspace ($p$-GSS)} if every integral vector in the linear hull of the vectors can be expressed as a $p$-adic linear combination of the vectors. For $p=2$, we use the acronyms DGSC and DGSS instead of $2$-GSC and $2$-GSS, respectively. 

The careful reader may notice that an \emph{integral} generating set for a cone is just a \emph{Hilbert basis}~\cite{Giles79}.\footnote{Some sources, including \cite{Schrijver03}, \S5.17, define Hilbert bases in a different way, and the reader should bear this in mind when studying examples. Throughout the paper, we follow the convention implied in the original paper~\cite{Giles79}, and explicitly stated in \cite{Schrijver98}, \S22.3.} In a departure from Hilbert bases, where a satisfying characterization remains elusive, we have the following polyhedral characterization of a $p$-GSC:

\begin{theorem}[proved in \S\ref{sec:generating}]\label{pGSC-char}
Let $\{a^1,\ldots,a^n\}\subseteq \mathbb{Z}^m$, $C:=\cone\{a^1,\ldots,a^n\}$, and $p$ a prime. Then $\{a^1,\ldots,a^n\}$ is a $p$-GSC if, and only if, for every nonempty face $F$ of $C$, $\{a^i:a^i\in F\}$ is a $p$-GSS.
\end{theorem}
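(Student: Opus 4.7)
The plan is to argue each direction separately, using two standard facts about polyhedral cones: every nonempty face $F$ of $C=\cone\{a^1,\ldots,a^n\}$ satisfies $F=\cone\{a^i:a^i\in F\}$, and whenever $v\in F$ is written as $v=\sum_i \lambda_i a^i$ with $\lambda\ge \0$, each index with $\lambda_i>0$ corresponds to some $a^i\in F$.

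For the implication ``$p$-GSC $\Rightarrow$ faces are $p$-GSS'', fix a nonempty face $F$ of $C$, set $S:=\{a^i:a^i\in F\}$, and take $v\in \lin S\cap \mathbb Z^m$. I choose a large positive integer $N$ and let $u:=N\sum_{a^i\in S} a^i$. The vector $\sum_{a^i\in S} a^i$ lies in the relative interior of $\cone(S)=F$, so $u$ does too, and for $N$ large enough $u+v\in \lin(F)$ still belongs to $F$, since $(u+v)/N\to \sum_{a^i\in S} a^i$ as $N\to\infty$. Both $u$ and $u+v$ are integral vectors in $C$, so by the $p$-GSC hypothesis each admits a conic combination $\sum_i \lambda_i a^i$ with $p$-adic nonnegative $\lambda_i$; by the face property recalled above, the supports of these combinations lie in $S$. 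Subtracting one from the other expresses $v$ as a $p$-adic linear combination of $S$.

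For the converse, I fix $v\in C\cap \mathbb Z^m$, let $F$ be the unique minimal nonempty face of $C$ containing $v$, and set $S:=\{a^i:a^i\in F\}$, so that $v$ lies in the relative interior of $F=\cone(S)$. The hypothesis applied to $F$ produces a $p$-adic representation $v=\sum_{a^i\in S}\mu_i a^i$, where the $\mu_i$ may be negative. Because $v$ sits in the relative interior of $\cone(S)$ there also exists a real representation $v=\sum_{a^i\in S}\alpha_i a^i$ with $\alpha_i>0$ for every $i$, and the difference $\alpha-\mu$ lies in the kernel of the linear map $\lambda\mapsto \sum_{a^i\in S}\lambda_i a^i$. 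This kernel is the null space of an integer matrix, so it has an integral basis; combined with the density of the $p$-adic rationals in $\mathbb R$, this implies that $p$-adic vectors are dense in the kernel. Approximating $\alpha-\mu$ closely enough by a $p$-adic kernel element $\eta$ gives $\lambda:=\mu+\eta$ with $\lambda_i>0$ for every $i\in S$; extending $\lambda$ by zero outside $S$ yields the desired $p$-adic conic representation of $v$.

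The one delicate step is the density argument in the converse: we need $p$-adic vectors in the kernel of an integer matrix to approximate any prescribed real kernel element, which rests on the rationality of the kernel together with density of the $p$-adic rationals in $\mathbb R$. Everything else reduces to the standard partition of a cone into the relative interiors of its faces and the face property recalled at the outset.
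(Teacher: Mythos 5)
Your proof is correct and follows essentially the same route as the paper's: the forward direction combines the face-support property with a shift-into-the-cone argument (the paper shifts in coefficient space by rounding the coefficient vector down, you translate in the ambient space by a large multiple of the generator sum --- both work), and your converse is exactly the paper's argument with its Density Lemma unpacked. That is, you pass to the minimal face containing the integral point, use the strictly positive representation coming from the relative interior, and perturb a $p$-adic solution of the defining linear system along a $p$-adically dense subset of the integer kernel to make it positive, which is precisely how the paper deduces a $p$-adic point of $\{y:By=b,\,y\geq\0\}$ from one of its affine hull.
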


The careful reader may notice that in contrast to total dual integrality, the characterization of totally dual $p$-adic systems, \Cref{TDp-char}, enjoys a third equivalent condition, namely (3). This new condition, as well as the characterization of a $p$-GSC, \Cref{pGSC-char}, is made possible due to a distinguishing feature of the $p$-adic rationals: \emph{density}. The $p$-adic rationals, as opposed to the integers, form a dense subset of~$\mathbb{R}$. We shall elaborate on this in \S\ref{sec:density}.

Going further, we have the following lattice-theoretic characterization of a $p$-GSS. We recall that the \emph{elementary divisors} (a.k.a. \emph{invariant factors}) of an integral matrix are the nonzero entries of the \emph{Smith normal form} of the matrix; see \S\ref{sec:generating} for more.

\begin{theorem}[proved in \S\ref{sec:generating}]\label{pGSS-char}
The following statements are equivalent for a matrix $A\in \mathbb{Z}^{m\times n}$ of rank~$r$ and a prime $p$:
(1) the columns of $A$ form a $p$-GSS,
(2) the rows of $A$ form a $p$-GSS,
(3) whenever $y^\top A$ and $Ax$ are integral, then $y^\top Ax$ is a $p$-adic rational,
(4) every elementary divisor of $A$ is a power of $p$,
(5) the GCD of the subdeterminants of $A$ of order $r$ is a power of $p$,
(6) there exists a matrix $B$ with $p$-adic entries such that $ABA=A$.
\end{theorem}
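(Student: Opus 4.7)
My plan is to make condition~(4) the hub and reduce every other condition to it, using the Smith normal form $A = UDV$, where $U \in \mathbb{Z}^{m\times m}$ and $V \in \mathbb{Z}^{n\times n}$ are unimodular and $D$ is the $m \times n$ diagonal matrix whose nonzero diagonal entries are the elementary divisors $d_1 \mid d_2 \mid \cdots \mid d_r$. Throughout, the key point is that $U$, $V$, $U^{-1}$, $V^{-1}$ are all integral, so multiplying by them preserves integrality, $p$-adicity, and the row/column spans.

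The equivalence (4)$\Leftrightarrow$(5) is immediate from the standard identity that $d_1 \cdots d_r$ equals the GCD of the $r\times r$ subdeterminants of $A$: since $p$ is prime and the $d_i$'s divide one another, this product is a power of~$p$ iff each factor is. For (2)$\Leftrightarrow$(4), given an integral row vector $v$ in the row span of $A$, the substitutions $\bar y^\top := y^\top U$ and $\bar v := vV^{-1}$ reduce the equation $y^\top A = v$ to the uncoupled scalar system $d_i \bar y_i = \bar v_i$ for $i \leq r$ (with remaining entries of $\bar y$ free); a $p$-adic $\bar y$ exists for every integral $\bar v$ with $\bar v_i = 0$ for $i>r$ precisely when each $1/d_i$ is $p$-adic. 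Applying the same reasoning to $A^\top$ (whose elementary divisors coincide with those of~$A$) yields (1)$\Leftrightarrow$(4). For (4)$\Leftrightarrow$(6), the substitution $C := VBU$ converts $ABA = A$ into $DCD = D$, which forces $C_{ii} = 1/d_i$ for $i\leq r$ while leaving the remaining entries of $C$ free; hence a $p$-adic~$B$ exists iff each $d_i$ is a power of~$p$.

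The direction (2)$\Rightarrow$(3) is clean: given integral $y^\top A$, use (2) to produce a $p$-adic $y'$ with $y'^\top A = y^\top A$, whence $y^\top A x = y'^\top (A x)$ is a $p$-adic combination of integers. For the converse (3)$\Rightarrow$(4), the crucial step is to exhibit, for each $i \leq r$, a witness that isolates~$1/d_i$. Setting $y^\top := (1/d_i)\, e_i^\top U^{-1}$ and $x := V^{-1} e_i / d_i$, a short computation gives $y^\top A = e_i^\top V$ (an integral row of $V$) and $A x = U e_i$ (an integral column of $U$), while $y^\top A x = 1/d_i$. Condition~(3) then forces $1/d_i$ to be $p$-adic, so $d_i$ is a power of~$p$.

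The main obstacle I anticipate is not conceptual but lies in the bookkeeping: keeping the left and right Smith-form changes of basis straight, and in particular choosing the witnesses in (3)$\Rightarrow$(4) so that $y^\top A$ and $A x$ are \emph{simultaneously} integral while their bilinear product isolates exactly the scalar $1/d_i$. Once the Smith form is invoked, everything else reduces to manipulations of a diagonal matrix with integer diagonal entries.
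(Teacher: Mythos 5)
Your proof is correct, and it takes a noticeably different route from the paper's. The paper proves (1)\,$\Leftrightarrow$\,(3) via its Theorem of the Alternative (the $p$-adic analogue of the Integer Farkas Lemma), gets (2)\,$\Leftrightarrow$\,(3) for free from the transposition-symmetry of (3), and only then reduces (1)--(3)\,$\Leftrightarrow$\,(4) to the Smith-normal-form case using the invariance of the $p$-GSS property under unimodular row/column operations. You instead make (4) the hub and handle every implication by direct computation with $A=UDV$; in particular your (3)\,$\Rightarrow$\,(4) replaces the Farkas-type argument with the explicit witnesses $y^\top=(1/d_i)e_i^\top U^{-1}$, $x=V^{-1}e_i/d_i$, which indeed give $y^\top A=e_i^\top V$, $Ax=Ue_i$ (both integral) and $y^\top Ax=1/d_i$ --- this checks out. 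What your approach buys is self-containment: you never need the Theorem of the Alternative, and the whole theorem reduces to bookkeeping on a diagonal matrix. What the paper's approach buys is reuse: the Theorem of the Alternative is a standalone tool deployed repeatedly elsewhere (e.g.\ in the characterization of $p$-adic polyhedra), so routing (1)\,$\Leftrightarrow$\,(3) through it keeps the argument uniform across the paper. One small imprecision in your (4)\,$\Leftrightarrow$\,(6): the equation $DCD=D$ does not leave all off-diagonal entries of $C$ free --- it forces $C_{ij}=0$ for $i\neq j$ with $i,j\leq r$ --- but since $0$ is $p$-adic this does not affect the conclusion that a $p$-adic $C$ (hence $B=V^{-1}CU^{-1}$) exists if and only if each $1/d_i$ is $p$-adic.
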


\Cref{pGSS-char} is used in \S\ref{sec:generating} to prove that testing the $p$-GSS property can be done in polynomial time. Subsequently, the problem of testing total dual $p$-adicness belongs to co-NP by \Cref{TDp-char} (see \S\ref{sec:TDp}), and the problem of testing the $p$-GSC property belongs to co-NP by \Cref{pGSC-char} (see \S\ref{sec:generating}). Whether the two problems belong to NP, or P, remains unsolved. It should be pointed out that testing total dual \emph{integrality}, as well as testing the Hilbert basis property, is co-NP-complete~\cite{Ding08,Pap11}.

\subsection{Connection to integral polyhedra and TDI systems}

Our characterizations stated so far, as well as our characterization of \emph{$p$-adic polyhedra} explained in \S\ref{sec:polyhedra}, have the following intriguing consequence:

\begin{theorem}\label{TDpq}
Let $A\in \mathbb{Z}^{m\times n},b\in \mathbb{Z}^m$, and $P:=\{x:Ax\leq b\}$. 
Then the following are equivalent:
(1) $Ax\leq b$ is totally dual $p$-adic for all primes $p$,
(2) $Ax\leq b$ is totally dual $p$- and $q$-adic, for distinct primes $p,q$,
(3) for every nonempty face $F$ of $P$, the GCD of the subdeterminants of $A_F$ of order $\rank{A_F}$ is $1$. 
\end{theorem}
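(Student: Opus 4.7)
The plan is to derive \Cref{TDpq} as a direct consequence of the two earlier characterizations \Cref{TDp-char} (conditions (1) and (3)) and \Cref{pGSS-char} (conditions (2) and (5)). The cycle of implications I would establish is $(1)\Rightarrow(2)\Rightarrow(3)\Rightarrow(1)$.

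The implication $(1)\Rightarrow(2)$ is immediate from the definition, since (1) asserts total dual $p$-adicness for \emph{every} prime and (2) only requires it for two specific primes. For $(2)\Rightarrow(3)$, I would fix an arbitrary nonempty face $F$ of $P$ and apply $(1)\Rightarrow(3)$ of \Cref{TDp-char} first with the prime $p$ and then with the prime $q$: this yields that the rows of $A_F$ form both a $p$-GSS and a $q$-GSS. By equivalence $(2)\Leftrightarrow(5)$ of \Cref{pGSS-char}, the GCD of the subdeterminants of $A_F$ of order $\rank{A_F}$ is simultaneously a power of $p$ and a power of $q$. Since $p$ and $q$ are distinct primes, the only nonnegative integer that is both a power of $p$ and a power of $q$ is $1$, so the GCD equals $1$, establishing (3).

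For the remaining implication $(3)\Rightarrow(1)$, fix an arbitrary prime $p$ and an arbitrary nonempty face $F$ of $P$. Since $1 = p^0$ is a power of $p$, the hypothesis that the GCD of the subdeterminants of $A_F$ of order $\rank{A_F}$ equals $1$ triggers $(5)\Rightarrow(2)$ of \Cref{pGSS-char}, giving that the rows of $A_F$ form a $p$-GSS. Applying $(3)\Rightarrow(1)$ of \Cref{TDp-char}, the system $Ax\leq b$ is totally dual $p$-adic. Since $p$ was arbitrary, this gives (1).

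Because all the heavy lifting has been done in \Cref{TDp-char} and \Cref{pGSS-char}, there is no real obstacle here beyond invoking the correct equivalences in the correct order. The only subtlety worth flagging explicitly in the write-up is the elementary number-theoretic observation, used in $(2)\Rightarrow(3)$, that a positive integer which is simultaneously a power of two distinct primes must equal $1$; everything else is bookkeeping.
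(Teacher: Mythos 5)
Your proposal is correct and follows exactly the same route as the paper's own proof: the cycle $(1)\Rightarrow(2)\Rightarrow(3)\Rightarrow(1)$, with $(2)\Rightarrow(3)$ and $(3)\Rightarrow(1)$ obtained by combining \Cref{TDp-char} with the equivalence of the $p$-GSS property and the GCD condition in \Cref{pGSS-char}. The only difference is that you spell out the number-theoretic remark and the direction $(3)\Rightarrow(1)$ in slightly more detail than the paper does.
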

\begin{proof}
{\bf (1) $\Rightarrow$ (2)} is immediate.
{\bf (2) $\Rightarrow$ (3)} For every nonempty face $F$ of $P$, the rows of $A_F$ form both a $p$- and a $q$-GSS by \Cref{TDp-char}, so the GCD of the subdeterminants of $A_F$ of order $\rank{A_F}$ is both a power of $p$ and a power of $q$ by \Cref{pGSS-char}, so the GCD of the subdeterminants of $A_F$ of order $\rank{A_F}$ must be $1$.
{\bf (3) $\Rightarrow$ (1)} follows from \Cref{TDp-char} and \Cref{pGSS-char}
\end{proof}

If $Ax\leq b$ is TDI, and therefore totally dual $p$-adic for any prime $p$, then statement (3) above must hold (this is folklore, and explored in \cite{Sebo90}. In fact, if $P$ is pointed, then for every vertex of $P$, we have a stronger property known as \emph{local strong unimodularity}~\cite{Gerards87}.) It was a widely known fact that the converse is not true. \Cref{TDpq} clarifies this further by equating (3) with (1) and (2). Going a step further, it is known that if $Ax\leq b$ is TDI, then $\{x:Ax\leq b\}$ is an integral polyhedron~\cite{Edmonds77,Giles79}. We shall strengthen this result in the pointed case:

\begin{theorem}[proved in \S\ref{sec:polyhedra}]\label{pq->integral}
If $Ax\leq b$ is totally dual $p$- and $q$-adic, for distinct primes $p,q$, then $\{x:Ax\leq b\}$ is an integral polyhedron.
\end{theorem}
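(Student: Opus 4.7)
The plan is to reduce integrality of $P$ to showing that every minimal nonempty face of $P$ contains an integer point, the standard characterization of integrality for rational polyhedra. Let $F$ be a minimal nonempty face of $P$ and let $A_Fx\leq b_F$ denote the subsystem of implicit equalities at $F$. Minimality of $F$ forces $F=\{x:A_Fx=b_F\}$, and nonemptiness of $F$ gives that this system is consistent over $\mathbb{Q}$.

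I would then apply \Cref{TDpq}: the hypothesis that $Ax\leq b$ is totally dual $p$- and $q$-adic yields condition~(3) of that theorem, so the GCD of the subdeterminants of $A_F$ of order $r:=\rank{A_F}$ equals $1$. By the equivalence $(5)\Leftrightarrow(4)$ of \Cref{pGSS-char}, every elementary divisor of $A_F$ equals $1$. Equivalently, there exist unimodular integer matrices $U,V$ with $UA_FV=D$, where $D_{ii}=1$ for $1\leq i\leq r$ and all other entries of $D$ vanish.

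To finish, I would argue that $A_Fx=b_F$ admits an integer solution. The change of variables $y=V^{-1}x$ converts the system into $Dy=Ub_F$, which is still consistent over $\mathbb{Q}$ and has integer right-hand side; from the block-identity shape of $D$ one reads off an integer solution $y^\ast$ (take its first $r$ coordinates equal to those of $Ub_F$ and the rest zero, with consistency forcing the remaining coordinates of $Ub_F$ to vanish), and then $x^\ast:=Vy^\ast$ is an integer solution of $A_Fx=b_F$, lying in $F$ and hence in $P$.

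The main obstacle is really this last step --- the passage from \emph{all elementary divisors of $A_F$ equal $1$} to \emph{the consistent integer system $A_Fx=b_F$ admits an integer solution}. Once granted, everything else amounts to unpacking \Cref{TDpq} and \Cref{pGSS-char}. This last step is classical from the theory of the Smith normal form; a clean alternative is to use the Smith normal form of $A_F$ to produce an integer matrix $B$ with $A_FBA_F=A_F$ (the $\mathbb{Z}$-analog of condition~(6) of \Cref{pGSS-char}) and observe that $x^\ast:=Bb_F$ is an integer solution, since $A_Fx^\ast=A_FBA_F\bar x=A_F\bar x=b_F$ for any rational $\bar x$ with $A_F\bar x=b_F$.
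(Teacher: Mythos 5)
Your proof is correct, but it follows a different path through the paper's machinery than the authors do. The paper's proof also reduces to minimal nonempty faces, but it first invokes \Cref{TDp->p-adic} to conclude that $P$ is both a $p$-adic and a $q$-adic polyhedron (this rests on the characterization of $p$-adic polyhedra in \Cref{p-adic-polyhedra}, i.e.\ on Strict Complementarity and the Density Lemma), and then applies \Cref{CO:pq->integral} --- the Theorem of the Alternative combined with the Integer Farkas Lemma and the observation that a number which is both $p$- and $q$-adic is an integer --- to upgrade the $p$- and $q$-adic points in each minimal face to an integral point. You instead route through \Cref{TDpq}(2)$\Rightarrow$(3) to get that the GCD of the order-$\rank{A_F}$ subdeterminants of $A_F$ is $1$, hence (since this GCD is the product of the elementary divisors, each at least $1$) that all elementary divisors of $A_F$ equal $1$, and then solve $A_Fx=b_F$ integrally via the Smith normal form. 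Your citation of ``$(5)\Leftrightarrow(4)$ of \Cref{pGSS-char}'' is slightly off as literally stated (that equivalence concerns powers of $p$, not the value $1$), but the product formula for elementary divisors, which the paper records explicitly, closes this instantly; alternatively, ``power of $p$ and power of $q$'' forces $1$. Neither route is circular. What each buys: the paper's argument factors through the independently interesting statement that a totally dual $p$-adic system defines a $p$-adic polyhedron, and needs no subdeterminant computation; yours is the more classical ``local unimodularity'' argument --- essentially the implication (3)$\Rightarrow$ integrality that the paper alludes to in its discussion following \Cref{TDpq} --- and makes the mechanism producing the integral point completely explicit.
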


Fulkerson's theorem that every integral set packing system is TDI, can be seen as a (stronger) converse to \Cref{pq->integral}~\cite{Fulkerson71}. As for set covering systems, there is a conjecture of Paul Seymour that predicts a (stronger) converse to \Cref{pq->integral}.

\begin{CN}[The Dyadic Conjecture \cite{Schrijver03}, \S79.3e]\label{dyadic-CON}
Let $A$ be a matrix with $0,1$ entries. If $Ax\geq \1,x\geq \0$ defines an integral polyhedron, then it is TDD.
\end{CN}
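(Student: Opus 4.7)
The plan is to attack the conjecture via \Cref{TDp-char} applied with $p=2$ to the system written in $\leq$-form. By that theorem it suffices to show that for every nonempty face $F$ of $P:=\{x:Ax\geq \1,\, x\geq \0\}$, the rows corresponding to the tight constraints at $F$ form a DGSS, and by the remark following \Cref{TDp-char} only the minimal nonempty faces need be considered. Since $P$ is pointed, these are the vertices of $P$, and by the integrality hypothesis each vertex is the characteristic vector $\chi_C$ of some member $C$ of the clutter $\zC$ whose incidence matrix is $A$.

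At the vertex $\chi_C$ the tight coordinate constraints are $x_j=0$ for $j\notin C$, and the tight constraints from $Ax\geq \1$ are exactly those rows $A_i$ of $A$ with $|\supp(A_i)\cap C|=1$. Adjoining coordinate vectors preserves the DGSS property, so the problem localizes to the following linear-algebra assertion: letting $M$ be the submatrix of $A$ formed by the rows $A_i$ with $|\supp(A_i)\cap C|=1$, restricted to the columns indexed by $C$, the GCD of the subdeterminants of $M$ of order $\rank{M}$ is a power of $2$ (\Cref{pGSS-char}(5)). This reformulation is routine; the content is all in the remaining step.

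The heart of the matter is to derive this arithmetic statement from the idealness of $\zC$. A natural avenue is through the blocker $b(\zC)$, which is ideal by Lehman's theorem and whose members $B$ with $|B\cap C|=1$ index the rows of $M$. I would try to induct on the size of the ground set of $\zC$ using deletion and contraction---both of which preserve idealness---and carry along a strengthened hypothesis that the DGSS property is inherited by the relevant submatrix after each operation; this parallels the standard inductive strategy for TDI-type theorems on set covering. A complementary, analytic attempt starts from any rational optimal dual solution $y^\ast$ and moves along $\ker(A_F^\top)$ to clear odd primes from its denominators, which again reduces exactly to finding a dyadic basis of $\ker(A_F^\top)$, i.e.\ to the DGSS property.

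The principal obstacle, and the reason the conjecture has resisted proof, is precisely this bridge from a polyhedral hypothesis (idealness) to a prime-specific lattice-theoretic conclusion (DGSS at $p=2$). At present, essentially the only nontrivial ideal families for which the DGSS statement is known are $T$-join clutters, where it follows from Seymour's deep TDD theorem. A full proof seems to require either a Lehman--/Seymour-style structural decomposition of ideal clutters---on each piece of which the $2$-adic GCD condition could be verified directly---or a genuinely new certificate ruling out every odd prime from the subdeterminant GCD of $M$. I would first test ideas on structured subclasses such as ideal binary clutters beyond $T$-joins, looking for principles general enough to extend to the full conjecture.
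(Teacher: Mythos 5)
You have not proved the statement, and neither does the paper: \Cref{dyadic-CON} is stated there as an open conjecture of Seymour, with only a first nontrivial step (the case where the primal optimum equals two) established in \cite{Abdi-dyadic}. Your write-up is candid about this, and the reduction you set up is essentially the correct and standard one: by \Cref{TDp-char} with $p=2$ (restricted to minimal nonempty faces, i.e.\ the vertices of the pointed polyhedron $\{x:Ax\geq \1,x\geq\0\}$), total dual dyadicness is equivalent to the rows tight at each vertex forming a DGSS, and after discarding the tight nonnegativity constraints this localizes to the condition of \Cref{pGSS-char}(5) on the submatrix of rows meeting the vertex's support exactly once, restricted to that support. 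One small correction: the vertices of an integral set covering polyhedron are the incidence vectors of the members of the \emph{blocker} of the clutter whose incidence matrix is $A$, not of the clutter itself; your later sentence about $b(\zC)$ has the roles the right way around, but the earlier one does not.

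The genuine gap is the one you yourself identify: there is no argument connecting the polyhedral hypothesis (integrality/idealness) to the $2$-adic GCD condition on the tight submatrices. The proposed avenues --- induction via deletion and contraction, or perturbing a rational dual optimum along $\ker(A_F^\top)$ --- both terminate exactly at the unproved DGSS assertion rather than resolving it; the second is in fact just a restatement of it via the Density Lemma. Note also that even the $T$-join case, which you cite as the known instance, is \emph{not} obtained in the paper by verifying the GCD condition at each vertex; \Cref{Tjoins-TDD} is derived from the packing theorem of \cite{Abdi-Tjoins}, and the paper explicitly warns (via the Blanu\v{s}a snark example) that basic dual optima for these systems can be non-dyadic, so no vertex-by-vertex Cramer-type certificate is available. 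In short, your proposal is a correct reformulation of the conjecture plus a research plan, not a proof; the statement remains open.
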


The authors recently proved the first nontrivial step of the Dyadic Conjecture: If $Ax\geq 1,x\geq \0$ defines an integral polyhedron, then for every nonnegative integral $w$ such that $\min\{w^\top x:Ax\geq \1,x\geq \0\}$ has optimal value two, the dual has a dyadic optimal solution~\cite{Abdi-dyadic}.

\subsection{Examples}

Our first example comes from Integer Programming, and more precisely, from matrices with restricted subdeterminants. Similar settings to the one below have been studied previously; see for example \cite{Lee89,Appa04} (the last reference has more relevant citations).

\begin{theorem}\label{p-adic-matrix}
Let $A\in \mathbb{Z}^{m\times n}$ be a matrix whose subdeterminants belong to $\{0\}\cup \{\pm p^k:k\in \mathbb{Z}_+\}$ for some prime $p$, and let $b\in \mathbb{Z}^m$. Then $Ax\leq b$ is totally dual $p$-adic.
\end{theorem}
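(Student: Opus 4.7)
The plan is to reduce the theorem to the machinery already developed, namely \Cref{TDp-char} and \Cref{pGSS-char}. By the equivalence (1)~$\Leftrightarrow$~(3) in \Cref{TDp-char}, it suffices to show that for every nonempty face $F$ of $P:=\{x:Ax\le b\}$, the rows of $A_F$ form a $p$-GSS. By the equivalence (2)~$\Leftrightarrow$~(5) in \Cref{pGSS-char}, this in turn will follow once we show that the GCD of the subdeterminants of $A_F$ of order $r:=\rank{A_F}$ is a power of~$p$.

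The key observation is that every square submatrix of $A_F$ is a square submatrix of $A$, so its determinant lies in $\{0\}\cup\{\pm p^k:k\in\mathbb{Z}_+\}$ by hypothesis. Since $A_F$ has rank $r$, there is at least one nonzero $r\times r$ subdeterminant, and it equals $\pm p^k$ for some $k\ge 0$. Hence the set of $r\times r$ subdeterminants of $A_F$ is a subset of $\{0\}\cup\{\pm p^j:j\in\mathbb{Z}_+\}$ containing at least one nonzero element, so its GCD is a nontrivial divisor of $p^k$, which is necessarily a power of~$p$, as needed.

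Combining these two applications yields the conclusion: for every nonempty face $F$, the rows of $A_F$ form a $p$-GSS by \Cref{pGSS-char}, and hence $Ax\le b$ is totally dual $p$-adic by \Cref{TDp-char}. There is no real obstacle here; the theorem is essentially a one-line corollary once one has internalized that the $p$-GSS property is certified by the determinantal condition (5) of \Cref{pGSS-char} and that determinantal conditions pass transparently from $A$ to any row submatrix $A_F$.
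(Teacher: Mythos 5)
Your proof is correct, but it takes a genuinely different route from the paper's. The paper proves \Cref{p-adic-matrix} directly and elementarily: it takes a \emph{basic} optimal dual solution $y^\star$, observes that the rows of $A$ indexed by the support of $y^\star$ are linearly independent, passes to a nonsingular square submatrix $B$ with $\det(B)=\pm p^k$, and concludes by Cramer's rule that $y^\star$ is $p$-adic. Your argument instead routes through the general machinery, using (1)~$\Leftrightarrow$~(3) of \Cref{TDp-char} together with (2)~$\Leftrightarrow$~(5) of \Cref{pGSS-char}, and the observation that every subdeterminant of a row submatrix $A_F$ is a subdeterminant of $A$; all of these steps check out (in particular, the GCD of a nonempty set of elements of $\{0\}\cup\{\pm p^j:j\in\mathbb{Z}_+\}$ containing a nonzero element divides some $p^k$ and is therefore a power of $p$). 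What each approach buys: your version is systematic and shows the theorem is a near-immediate corollary of the determinantal certificate for the $p$-GSS property, but it depends on the full strength of \Cref{TDp-char} (proved only in \S\ref{sec:TDp}). The paper's version is self-contained and establishes the strictly stronger fact that \emph{every basic} dual optimal solution is already $p$-adic --- a point the paper deliberately highlights, since for the $T$-join system of \Cref{Tjoins-TDD} basic optimal solutions can fail to be dyadic, so no argument of your reductive type could be replaced there by the basic-solution argument, and conversely the basic-solution phenomenon here is information your proof does not recover.
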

\begin{proof}
Choose an integral $w$ such that $\min\{b^\top y: A^\top y=w, y\geq\0\}$ has an optimal solution. Let $y^\star$ be a basic optimal solution. We claim that $y^\star$ is $p$-adic, thereby finishing the proof. Let $I:=\{i\in [m]:y^\star_i>0\}$. Then $A_I$, the row submatrix of $A$ corresponding to the indices $I$, has full row rank. In particular, $y^\star_I$ is the unique solution to $A_I^\top z=w$. By moving to a square row submatrix $B$ of $A_I^\top$ of full rank, and the corresponding subvector $w_B$ of $w$, we see that $y^\star_I$ is the unique solution to $B z=w_B$. Since $B$ is nonsingular, $\det(B)\in \{\pm p^k:k\in \mathbb{Z}_+\}$, so by Cramer's rule $y^\star_I$, and therefore $y^\star$, is $p$-adic.
\end{proof}

For example, the node-edge incidence matrix of a graph is known to satisfy the hypothesis for $p=2$ (folklore), and therefore leads to a TDD system. More generally, matrices whose subdeterminants belong to $\{0\}\cup \{\pm 2^k:k\in \mathbb{Z}_+\}$ have been studied from a matroid theoretic perspective; matroids representable over the rationals by such matrices are known as \emph{dyadic matroids} and their study was initiated by Whittle~\cite{Whittle95}.

Moving on, from Combinatorial Optimization, we get examples only in the dyadic setting. Let $G=(V,E)$ be a graph, and $T$ a nonempty subset of even cardinality. A \emph{$T$-join} is a subset $J\subseteq E$ such that the odd-degree vertices of $G[J]$ is precisely $T$. $T$-joins were studied due to their connection to the \emph{minimum weight perfect matching problem}, but also to the \emph{Chinese postman set problem} (see \cite{Cook98}, Chapter 5). As a consequence of a recent result~\cite{Abdi-Tjoins}, we shall obtain the following.

\begin{theorem}[proved in \S\ref{sec:examples}]\label{Tjoins-TDD}
Let $G=(V,E)$ be a graph, and $T\subseteq V$ a nonempty subset of even cardinality. Then the linear system $x(J)\geq 1~\forall \text{ $T$-joins $J$}; x\geq \0$ is TDD.
\end{theorem}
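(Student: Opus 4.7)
The plan is to translate the primal system into the paper's standard form, explicitly write out the dual, and then invoke the recent result of \cite{Abdi-Tjoins} to produce a dyadic dual optimum. First, I would write the primal as $Ax\leq b$ by taking a row $-\chi^J$ of $A$ for each $T$-join $J$ (with corresponding entry of $b$ equal to $-1$) and a row $-\chi^e$ for each edge $e$ (with corresponding entry of $b$ equal to $0$). Fix $w\in\mathbb{Z}^E$ for which the dual $\min\{b^\top y:A^\top y=w,\ y\geq\0\}$ has an optimum, and write the dual variables as $y_J$ (for $T$-joins) and $z_e$ (for edges). The constraint $A^\top y=w$ becomes
$$
\sum_{J\ni e}y_J+z_e=-w_e\qquad\forall e\in E,
$$
and the objective $b^\top y=-\sum_J y_J$ is to be minimized. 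Hence the dual is feasible exactly when $\tilde w:=-w\in\mathbb{Z}_+^E$, in which case it is equivalent to the fractional $T$-join packing LP
$$
\max\Bigl\{\sum_J y_J:\sum_{J\ni e}y_J\leq \tilde w_e\ \ \forall e\in E,\ y\geq\0\Bigr\}.
$$

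Next, I would invoke the recent result of \cite{Abdi-Tjoins}, which I expect asserts that for every $\tilde w\in\mathbb{Z}_+^E$ the above packing LP admits a \emph{half-integral} optimal solution $y^\star$. Recovering the slacks $z^\star_e:=\tilde w_e-\sum_{J\ni e}y^\star_J$, which are half-integral because $\tilde w$ is integral and $y^\star$ is half-integral and are nonnegative by feasibility of $y^\star$, gives an optimal solution $(y^\star,z^\star)$ to the original dual whose entries all lie in $\tfrac12\mathbb{Z}_+$. Since $\tfrac12=\tfrac{1}{2^{1}}$ is dyadic, so is the pair $(y^\star,z^\star)$, and this establishes TDD.

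The main obstacle is, unsurprisingly, encapsulated in the cited result \cite{Abdi-Tjoins}: the combinatorial substance is the half-integrality of the fractional $T$-join packing LP at integral capacities, a packing-type statement in the spirit of classical $T$-join results. The remainder of the argument is pure bookkeeping --- translating the covering form to the paper's $Ax\leq b$ convention, handling the slack variables introduced by the nonnegativity constraints, and noting that half-integral numbers are dyadic. If \cite{Abdi-Tjoins} were unavailable, one would essentially need to reprove a weighted $T$-join packing theorem, probably by an inductive shrinking argument using $T$-cut structure in the style of Seymour, which is where all of the actual work would live.
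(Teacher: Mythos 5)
Your overall architecture matches the paper's: put the system in standard form, pass to the dual packing LP, and defer the combinatorial content to \cite{Abdi-Tjoins}. The LP bookkeeping (signs, slack variables, half-integral or dyadic numbers being dyadic) is fine. The problem is that you have attributed to \cite{Abdi-Tjoins} a statement that is substantially stronger than what it proves, in two ways. First, the cited theorem (stated in the paper as \Cref{Tjoins-dyadic}) produces a \emph{dyadic} fractional packing of $T$-joins, not a \emph{half-integral} one. Half-integrality of the $T$-join packing LP is not a known result; the paper explicitly notes that even totally dual \emph{quarter}-integrality of this system is only conjectured (\cite{Cornuejols01}, Conjecture 2.15), and that for many snarks the basic optimal dual solutions are genuinely non-half-integral (e.g.\ values $\frac13$ and $\frac23$ for the Blanu\v{s}a snark). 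So as literally written, your proof rests on an unavailable theorem. This particular gap is harmless to the conclusion --- dyadic suffices where you use half-integral --- but it misplaces where the difficulty lies.

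Second, and more materially, \Cref{Tjoins-dyadic} is stated only for \emph{unit} capacities: it packs $T$-joins to total value $\tau$, the minimum \emph{cardinality} of a $T$-cut, subject to $\sum(y_J : J\ni e)\leq 1$. Your proposal silently assumes the weighted version for arbitrary $\tilde w\in\mathbb{Z}_+^E$. The paper bridges this by replacing each edge $e$ with $w_e$ parallel edges (deleting $e$ when $w_e=0$), applying the unit-capacity theorem in the blown-up graph $G'$, and projecting the resulting dyadic packing of $T$-joins of $G'$ back to a dyadic packing in $G$ satisfying $A^\top y^\star\leq w$; optimality is then certified by exhibiting the primal feasible point $\chi_{\delta(U)}$ for a minimum-weight $T$-cut, which matches the packing value $\tau_w$. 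This reduction is routine but it is a genuine step of the proof, not part of the citation, and your writeup needs it. With ``half-integral'' replaced by ``dyadic'' and the edge-replication reduction supplied, your argument becomes the paper's proof.
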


The basic solutions to the dual of $\min\{\1^\top x:x(J)\geq 1~\forall \text{ $T$-joins $J$}; x\geq \0\}$ may be non-dyadic, as we note in \S\ref{sec:examples}, thereby creating an interesting contrast between the proofs of \Cref{p-adic-matrix} and \Cref{Tjoins-TDD}. The system in \Cref{Tjoins-TDD} defines an integral set covering polyhedron (see \cite{Cornuejols01}, Chapter 2), so \Cref{Tjoins-TDD} verifies \Cref{dyadic-CON} for such instances. In fact, it has been conjectured that the system in \Cref{Tjoins-TDD} is totally dual \emph{quarter-integral} (\cite{Cornuejols01}, Conjecture 2.15). Observe that quarter-integrality is a stronger variant of dyadicness, and should not be confused with ``$4$-adicness'', which is not even defined in this paper.

Moving on, let $G=(V,E)$ be a graph. A \emph{cycle} is a subset $C\subseteq E$ such that every vertex in $V$ is incident with an even number of edges in $C$. Observe that every cycle yields an Eulerian subgraph. A \emph{circuit} is a nonempty cycle that does not contain another nonempty cycle. A \emph{perfect matching} is a subset $M\subseteq E$ such that every vertex in $V$ is incident with exactly one edge in $M$. Define $\mathbf{C}(G):=\{\chi_C:C \text{ a circuit of $G$}\}$ and $\mathbf{M}(G):=\{\chi_M:M \text{ a perfect matching of $G$}\}$. See \cite{Goddyn93} for an excellent survey on lattice and conic characterizations of these two sets.\footnote{The reader should take caution as \cite{Goddyn93} uses a different definition of Hilbert bases, following the convention set forth by~\cite{Schrijver03}.} 
We shall prove the following:

\begin{theorem}[proved in \S\ref{sec:examples}]\label{circuits-DGSC}
Let $G=(V,E)$ be a graph. Then $\mathbf{C}(G)$ is a DGSC.
\end{theorem}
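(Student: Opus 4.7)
The plan is to prove the stronger statement that for every $x \in \mathbb{Z}^E \cap \cone(\mathbf{C}(G))$, the doubled vector $2x$ admits a decomposition $2x = \sum_i \chi_{C_i}$ as a nonnegative integer combination of circuit characteristic vectors; dividing by two then expresses $x$ itself as a dyadic nonnegative combination of circuits, which is the DGSC property.

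The first step is to translate membership in the cycle cone into multigraph language. One has $x \in \cone(\mathbf{C}(G))$ if and only if $x\geq \mathbf{0}$ and $x_e \leq x(\delta(v)\setminus\{e\})$ for every vertex $v$ and every edge $e \in \delta(v)$. Let $H$ be the multigraph on $V$ obtained by replacing each $e \in E$ by $2x_e$ parallel copies. Then $H$ is Eulerian, and the cone condition translates precisely to the \emph{balance} condition that at every vertex $v$, the multiplicity of any underlying edge in $H$ is at most $\frac{1}{2}\deg_H(v)$.

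The heart of the proof is a decomposition lemma: every balanced Eulerian multigraph obtained from a simple graph $G$ in this way admits a partition of its edge set into edge sets of circuits of $G$, as opposed to digons formed by two parallel copies of a single edge. I would establish this by iteratively peeling off a circuit of $G$ contained in the remaining multigraph while preserving both the Eulerian property and the balance condition. The main obstacle is that a naively chosen circuit can violate a tight balance constraint at a vertex it avoids, so the circuit to be peeled must be selected carefully---for instance by picking one that passes through every vertex where the balance inequality is currently tight, or by invoking a splitting-off / transition-system argument (in the spirit of Mader's theorem) that at each vertex avoids pairing two copies of the same underlying edge.

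As an alternative route, one can invoke \Cref{pGSC-char} to reduce the claim to showing that for every nonempty face $F$ of $\cone(\mathbf{C}(G))$ the set $\{\chi_C : C \text{ a circuit of } G,\, \chi_C \in F\}$ is a DGSS, and then establish DGSS via \Cref{pGSS-char}---for instance by verifying that the GCD of the maximal-order subdeterminants of the restricted circuit-edge incidence matrix is a power of two. Both routes ultimately rely on the parity structure of circuits together with the balance condition coming from the cone inequalities.
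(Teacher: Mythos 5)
Your main route has a fatal gap: the statement you set out to prove --- that $2x$ decomposes as a nonnegative \emph{integer} combination of circuit characteristic vectors for every integral $x\in\cone(\mathbf{C}(G))$ --- is far stronger than \Cref{circuits-DGSC} and is in fact open. Taking $G$ bridgeless and $x=\1$ (note $\1\in\cone(\mathbf{C}(G))$ precisely when $G$ is bridgeless), your claim asserts $2\cdot\1=\sum_i\chi_{C_i}$ for circuits $C_i$, which is exactly the Cycle Double Cover Conjecture; the paper points this out immediately after stating \Cref{circuits-DGSC} and proves only the dyadic, not the half-integral, version for precisely this reason. The Alspach--Goddyn--Zhang theorem cited in \S\ref{sec:examples} shows that graphs with a Petersen minor fail the circuit cover property, so no routine ``peel off a circuit of $G$ while preserving balance'' induction can succeed; your proposed decomposition lemma is exactly where the difficulty of CDC lives. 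There is also an independent error earlier in the argument: membership in $\cone(\mathbf{C}(G))$ is \emph{not} characterized by the inequalities $x_e\leq x(\delta(v)\setminus\{e\})$ over vertex stars alone. Seymour's characterization, quoted in \S\ref{sec:examples}, requires $x_f\leq x(\delta(U)\setminus\{f\})$ over \emph{all} cuts $\delta(U)$. For instance, two triangles joined by a bridge satisfy every vertex-star inequality with $x=\1$, yet $\1\notin\cone(\mathbf{C}(G))$ because the bridge lies in no circuit.

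Your alternative route (reduce to a DGSS statement on each face via \Cref{pGSC-char} and verify condition (5) of \Cref{pGSS-char}) has the right general shape but contains no argument: controlling the GCD of the maximal-order subdeterminants of the circuit--edge incidence matrix restricted to an arbitrary face of the cone is precisely the hard content, and nothing in the sketch addresses it. The paper's actual proof goes a different way entirely: it subdivides each edge of $G$ once so that the cycles of $G$ become the $T'$-joins of the subdivided graph (\Cref{cycles-TDD}), invokes the dyadic $T$-join packing theorem (\Cref{Tjoins-dyadic}, which rests on Lov\'asz's matching lattice theorem), deduces via \Cref{TDp-char} applied to the minimal face containing $\frac{1}{|E|}\cdot\1$ that the cycle vectors form a DGSC (\Cref{cycles-DGSC}), and finally passes from cycles to circuits using the fact that every nonempty cycle is a disjoint union of circuits. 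Some external input of this strength appears unavoidable; your sketch supplies none.
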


If $G$ is bridgeless, then the \emph{Cycle Double Cover Conjecture}~\cite{Szekeres73,Seymour81} predicts that $\1$ can be written as a \emph{half-integral} conic combination of the vectors in $\mathbf{C}(G)$; \Cref{circuits-DGSC} implies this can be done \emph{dyadically}.

\begin{theorem}[proved in \S\ref{sec:examples}]\label{pm-DGSC}
Let $G=(V,E)$ be a graph such that $|V|$ is even. Then $\mathbf{M}(G)$ is a DGSC.
\end{theorem}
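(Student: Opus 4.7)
The plan is to apply \Cref{pGSC-char}, reducing the claim to showing that, for every nonempty face $F$ of $\cone(\mathbf{M}(G))$, the set $\mathbf{M}_F := \{\chi_M \in \mathbf{M}(G) : \chi_M \in F\}$ is a DGSS. Write $F = \cone(\mathbf{M}(G)) \cap \{x : c^\top x = 0\}$ for some $c \in \mathbb{R}^E$ satisfying $c^\top \chi_M \geq 0$ for every perfect matching~$M$, so that $\mathbf{M}_F$ consists of the characteristic vectors of the ``$c$-minimum'' perfect matchings of~$G$.

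The first key step is a face-closure lemma. Given perfect matchings $M_1, M_2$ with $\chi_{M_1}, \chi_{M_2} \in F$, decompose $M_1 \triangle M_2$ into alternating circuits $C_1, \ldots, C_k$, and for each $S \subseteq [k]$ set $M^{(S)} := M_1 \triangle \bigl(\bigcup_{i \in S} C_i\bigr)$, which is again a perfect matching. The quantity $c^\top \chi_{M^{(S)}}$ is additive in $S$; that is, $c^\top \chi_{M^{(S)}} = \sum_{i \in S} e_i$ for $e_i := c^\top \chi_{M^{(\{i\})}}$. Each $e_i \geq 0$ because $M^{(\{i\})}$ is a perfect matching, and $\sum_i e_i = c^\top \chi_{M_2} = 0$; hence every $e_i = 0$, and every $\chi_{M^{(S)}} \in \mathbf{M}_F$. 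Combining this with the averaging identity
\[
\chi_{M_1} + \chi_{M_2} = 2^{1-k} \sum_{S \subseteq [k]} \chi_{M^{(S)}},
\]
I obtain a dyadic conic decomposition of every pairwise sum $\chi_{M_1} + \chi_{M_2}$ in terms of vectors in $\mathbf{M}_F$.

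To promote this to the full DGSS property, I would invoke \Cref{pGSS-char} and verify that the elementary divisors of the incidence matrix whose columns are the vectors of $\mathbf{M}_F$ are all powers of~$2$. The natural tool is Lov\'asz's matching-lattice theorem: through the tight-cut decomposition into bricks and braces, the $\mathbb{Z}$-lattice generated by the perfect matchings of any matching-covered graph has index $1$ or $2$ in the ``equal-degree'' integer sublattice of $\mathbb{Z}^E$, with index~$2$ arising precisely when a Petersen brick is present. The face-closure lemma suggests that $\mathbf{M}_F$ corresponds to the perfect matchings of a matching-covered subgraph of $G$ obtained by deleting the edges that never appear in a PM of $\mathbf{M}_F$ and contracting those that always appear, so the same power-of-$2$ index bound applies to $\mathbf{M}_F$, yielding the required elementary-divisor characterization.

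The main obstacle will be to make the identification of $\mathbf{M}_F$ with the perfect matchings of a matching-covered minor of $G$ fully rigorous, so that Lov\'asz's theorem is applicable at the level of the face. The face-closure lemma provides the central combinatorial mechanism (any two elements of $\mathbf{M}_F$ differ by alternating-circuit switches that stay within $\mathbf{M}_F$), but its translation into a clean tight-cut or brick-level statement for $\mathbf{M}_F$, and the resulting control on the elementary divisors, is where the detailed work lies.
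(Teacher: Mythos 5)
Your face-closure lemma and the averaging identity for $\chi_{M_1}+\chi_{M_2}$ are both correct, and framing the problem via \Cref{pGSC-char} (show $\mathbf{M}_F$ is a DGSS for every face $F$) is legitimate. The genuine gap is exactly the step you defer as ``where the detailed work lies'': the claim that $\mathbf{M}_F$ is the set of perfect matchings of a matching-covered minor of $G$ obtained by deleting the never-used edges and contracting the always-used ones is false. Take the triangular prism with triangles $\{1,2,3\}$, $\{4,5,6\}$ and rungs $14,25,36$, and let $c$ assign $-1$ to the three top-triangle edges, $0$ to the bottom-triangle edges, and $1$ to the rungs. The four perfect matchings have $c$-values $0,0,0$ (the three matchings using exactly one rung) and $3$ (the all-rungs matching), so $c^\top x\geq 0$ is valid for $\cone(\mathbf{M}(G))$ and $\mathbf{M}_F$ consists of the three one-rung matchings. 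These three matchings partition $E$, so nothing is deleted and nothing is contracted, yet the ``minor'' is $G$ itself and its perfect matchings strictly contain $\mathbf{M}_F$. What actually cuts out a face of the perfect matching cone is a set of deleted edges together with \emph{tight odd cuts} (here $x(\delta(\{1,2,3\}))=1$), so relating the lattice generated by $\mathbf{M}_F$ to Lov\'asz's matching lattice requires a tight cut decomposition of the face and an argument that the power-of-two index survives recombination of the brick lattices; that is essentially the content of the cited \Cref{Tjoins-dyadic}, not a direct application of Lov\'asz's theorem to one minor. Note also that your averaging identity, while true, never feeds into the elementary-divisor computation, so it does no work in the proof as proposed.

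For comparison, the paper avoids analyzing faces of $\cone(\mathbf{M}(G))$ altogether: it quotes \Cref{Tjoins-dyadic} to get \Cref{Tjoins-TDD} (the $T$-join covering system is TDD), sets $T=V$, observes that the minimal face of that polyhedron containing $\frac{2}{|V|}\cdot\1$ has precisely the perfect matching inequalities tight, and applies \Cref{TDp-char} (1)$\Rightarrow$(2) to that single face to conclude that $\mathbf{M}(G)$ is a DGSC. A self-contained argument along your lines would amount to reproving the $T=V$ case of \Cref{Tjoins-dyadic}.
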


If $G$ is an \emph{$r$-graph}, then the \emph{Generalized Berge-Fulkerson Conjecture}~\cite{Seymour79} predicts that $\1$ can be written as a \emph{half-integral} conic combination of $\mathbf{M}(G)$; \Cref{pm-DGSC} proves this can be done \emph{dyadically}.

In \S\ref{sec:examples}, we see that the preceding three theorems do \emph{not} extend to the $p$-adic setting for $p\geq 3$, further emphazing the importance of the dyadic setting and the focus of the paper.

\section{Density Lemma and the Theorem of the Alternative}\label{sec:density}

Many of our results are made possible by an important feature of the $p$-adic rationals distinguishing them from the integers, namely \emph{density}.

\begin{RE}\label{density-remark}
The $p$-adic rationals form a dense subset of $\mathbb{R}$.
\end{RE}

\begin{LE}[Density Lemma]\label{density}
Let $A\in \mathbb{Z}^{m\times n},b\in \mathbb{Z}^m$.
If $\{x:Ax=b\}$ contains a $p$-adic point, then the $p$-adic points in the set form a dense subset. In particular, a nonempty rational polyhedron contains a $p$-adic point if, and only if, its affine hull contains a $p$-adic point.
\end{LE}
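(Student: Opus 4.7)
The plan is to reduce both parts to \Cref{density-remark} (density of $p$-adic rationals in $\mathbb{R}$) via two standard facts: the solution set of an integral homogeneous system has an integral basis, and every nonempty polyhedron has a nonempty relative interior.

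For the density part, let $S := \{x:Ax=b\}$ and pick a $p$-adic point $x_0 \in S$. Writing $S = x_0 + \ker(A)$, I would fix an integral basis $v^1,\ldots,v^k$ of the rational subspace $\ker(A)$. Any $x^\star \in S$ has a unique real expansion $x^\star = x_0 + \sum_{i=1}^k \lambda_i v^i$. By \Cref{density-remark}, for every $\varepsilon>0$ I can pick $p$-adic rationals $\mu_i$ with $|\mu_i - \lambda_i|$ as small as I like, and then $y := x_0 + \sum_{i=1}^k \mu_i v^i$ is $p$-adic (as a $p$-adic linear combination of $p$-adic/integer vectors), lies in $S$ (since the correction $\sum_i \mu_i v^i$ is in $\ker(A)$), and can be made arbitrarily close to $x^\star$ in any norm by choosing the $\mu_i$ close enough to the $\lambda_i$.

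For the ``in particular'' statement, let $P$ be a nonempty rational polyhedron. The forward direction is immediate since $P \subseteq \aff(P)$. For the reverse direction, suppose $\aff(P)$ contains a $p$-adic point. Write $\aff(P) = \{x : A_= x = b_=\}$ using the implicit equalities and $P = \{x : A_= x = b_=,\ A_< x \le b_<\}$, where $A_< x < b_<$ holds strictly on the relative interior $\mathrm{relint}(P)$, which is nonempty since $P$ is. Pick $x^\star \in \mathrm{relint}(P)$, so that $A_< x^\star < b_<$ with a positive slack. By the density part applied to $\aff(P)$, I can find a $p$-adic point $y \in \aff(P)$ with $\|y - x^\star\|$ small enough that the strict inequalities $A_< y < b_<$ are preserved; then $y \in P$ is the desired $p$-adic point.

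I do not foresee a real obstacle here: the only substantive input is \Cref{density-remark}, and the rest is bookkeeping with an integral basis of $\ker(A)$ and with the relative interior of $P$. The mildest care point is ensuring that the approximating point $y$ is simultaneously $p$-adic and still satisfies the strict inequalities defining $\mathrm{relint}(P)$, which follows because the set of $p$-adic points in $\aff(P)$ is dense while the strict-inequality region is open in $\aff(P)$.
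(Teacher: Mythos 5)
Your proof is correct and follows essentially the same route as the paper: the first part is the paper's argument verbatim (integral basis of $\ker(A)$ plus density of the $p$-adic rationals), and your second part merely spells out the relative-interior/openness argument that the paper dismisses with ``it suffices to prove the first statement.''
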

\begin{proof}
It suffices to prove the first statement. Suppose $\{x:Ax=b\}$ contains a $p$-adic point, say $\hat{x}$. Since $A$ has integral entries, its kernel has an integral basis, say $d^1,\ldots,d^r$. Observe that $\{x:Ax=b\}$ is the set of vectors of the form $\hat{x}+\sum_{i=1}^r \lambda_i d^i$ where $\lambda\in \mathbb{R}^r$. Consider the set $$
S:=\left\{\hat{x}+\sum_{i=1}^r \lambda_i d^i: \lambda_i \text{ is $p$-adic for each $i$}\right\}.
$$ By \Cref{density-remark}, it can be readily checked that $S$ is a dense subset of $\{x:Ax=b\}$. Since $\hat{x}$ is $p$-adic, and the $d^i$'s are integral, the points in $S$ are $p$-adic, thereby proving the lemma.
\end{proof}

A natural follow-up question arises: When does a rational subspace contain a $p$-adic point? Addressing this question requires a familiar notion in Integer Programming. Every integral matrix of full row rank can be brought into \emph{Hermite normal form} by means of \emph{elementary unimodular column operations}. In particular, if $A$ is an integral $m\times n$ matrix of full row rank, there exists an $n\times n$ \emph{unimodular} matrix $U$ such that $AU=(B~\0)$, where $B$ is a non-singular $m\times m$ matrix, and $\0$ is an $m\times (n-m)$ matrix with zero entries. By a square unimodular matrix, we mean a square integral matrix whose determinant is $\pm 1$; note that the inverse of such a matrix is also unimodular. See (\cite{ConfortiCornuejolsZambelli2014}, Section 1.5.2) or (\cite{Schrijver98}, Chapter 4) for more details.

\begin{LE}[Theorem of the Alternative]\label{alternative}
Let $A\in \mathbb{Z}^{m\times n}, b\in \mathbb{Z}^m$ and $p$ a prime. Then either $Ax=b$ has a $p$-adic solution, or there exists a $y\in \mathbb{R}^m$ such that $y^\top A$ is integral and $y^\top b$ is non-$p$-adic, but not both.
\end{LE}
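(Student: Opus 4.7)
The ``not both'' direction is immediate: if $x^\star$ is a $p$-adic solution to $Ax=b$ and $y^\top A$ is integral, then $y^\top b = (y^\top A)x^\star$ is an integer combination of $p$-adic numbers, hence $p$-adic. So I focus on showing that if no $p$-adic solution exists, then some $y$ as in the statement exists.

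First dispatch the inconsistent case. If $Ax=b$ has no real solution at all, then by the classical Fredholm alternative there is $y$ with $y^\top A = \0$ and $y^\top b \neq 0$; scaling $y$ by any non-$p$-adic real (e.g.\ $\frac{1}{p(y^\top b)}$) keeps $y^\top A = \0$ integral and makes $y^\top b$ non-$p$-adic. So I may assume $Ax=b$ is consistent. Next I reduce to the full row rank case: pick a maximal linearly independent subset of rows of $A$; since $\rank A = \rank [A\mid b]$ by consistency, this gives an equivalent subsystem $A'x=b'$ with the same solution set and hence the same $p$-adic solution set. Any $y'$ that works for $(A',b')$ pads with zeros to a $y$ that works for $(A,b)$, so I may and do assume $A$ has full row rank.

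Now I invoke the Hermite normal form: there exists a unimodular $U \in \mathbb{Z}^{n\times n}$ with $AU = (B \mid \0)$, where $B \in \mathbb{Z}^{m\times m}$ is nonsingular. Substituting $x = Uz$ and splitting $z = (z', z'')$ with $z' \in \mathbb{R}^m$, the system $Ax=b$ becomes $Bz' = b$, i.e.\ $z' = B^{-1}b$, with $z''$ free. Since $U$ and $U^{-1}$ are both integral, $x$ is $p$-adic iff $z$ is $p$-adic; taking $z''=\0$, we see $Ax=b$ admits a $p$-adic solution iff the vector $B^{-1}b$ is $p$-adic. By hypothesis it is not, so some coordinate $(B^{-1}b)_i$ is non-$p$-adic.

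The candidate is $y^\top := e_i^\top B^{-1}$ (the $i$-th row of $B^{-1}$). Then $y^\top b = (B^{-1}b)_i$ is non-$p$-adic by choice of $i$. For the integrality of $y^\top A$, observe that $A = (B \mid \0)U^{-1}$, hence
\[
y^\top A \;=\; e_i^\top B^{-1}(B \mid \0) U^{-1} \;=\; e_i^\top (I \mid \0) U^{-1},
\]
which is the $i$-th row of $U^{-1}$; since $U^{-1}$ is integral, $y^\top A$ is integral, as required.

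The only delicate point is the reduction to full row rank (one must verify consistency is used to preserve the solution set) and the verification that $y^\top A$ is integral, which is where the unimodularity of $U$ does the essential work; everything else is a routine packaging of Hermite normal form. I expect no real obstacle beyond being careful with these bookkeeping steps.
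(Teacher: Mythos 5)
Your proof follows the paper's argument essentially step for step: the same substitution argument for ``not both,'' the same dispatch of the inconsistent case via the Fredholm alternative plus scaling, the same reduction to full row rank, and the same use of the Hermite normal form $AU=(B\mid \0)$ with a row of $B^{-1}$ serving as the certificate $y$. The one slip is in your parenthetical example for the inconsistent case: scaling $y$ by $\frac{1}{p(y^\top b)}$ makes the new value of $y^\top b$ equal to $\frac{1}{p}$, which \emph{is} a $p$-adic rational (it has the form $\frac{a}{p^k}$ with $a=1$, $k=1$), so this particular choice does not produce a non-$p$-adic value; scale instead so that $y^\top b=\frac{1}{q}$ for some prime $q\neq p$ (or any other explicitly non-$p$-adic target). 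With that correction the argument is complete and coincides with the paper's proof.
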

\begin{proof}
Suppose $A\hat{x}=b$ for a $p$-adic point $\hat{x}$, and $y^\top A$ is integral. Then $y^\top b=y^\top (A\hat{x}) = (y^\top A)\hat{x}$ is an integral linear combination of $p$-adic rationals, and is therefore a $p$-adic rational. Thus, both statements cannot hold simultaneously.
Suppose $Ax=b$ has no $p$-adic solution. If $Ax=b$ has no solution at all, then there exists a vector $y$ such that $y^\top A=\0$ and $y^\top b\neq 0$; by scaling $y$ appropriately, we can ensure that $y^\top b$ is non-$p$-adic, as desired. Otherwise, $Ax=b$ has a solution. We may assume that $A$ has full row rank. Then there exists a square unimodular matrix $U$ such that $AU = (B~\0)$, where $B$ is a non-singular matrix. Observe that $\{x:Ax=b\}=\{Uz:AUz=b\}$. Thus, as $Ax=b$ has no $p$-adic solution $x$, and $U$ has integral entries, we may conclude that the system $AUz=b$ has no $p$-adic solution $z$ either. Let us expand the latter system. Let $I,J$ be the sets of column labels of $B,\0$ in $AU=(B~\0)$, respectively. Then
$$
\left\{z:AUz=b\right\}
=\left\{z:(B~\0)\begin{pmatrix}z_I\\z_J\end{pmatrix}=b\right\}
=\left\{z:Bz_I = b, z_J \text{ free}\right\}
=\left\{z:z_I=B^{-1}b, z_J \text{ free}\right\}.
$$
In particular, since $AUz=b$ has no $p$-adic solution, the vector $B^{-1}b$ is non-$p$-adic. Thus, there exists a row $y^\top$ of $B^{-1}$ for which $y^\top b$ is non-$p$-adic. We claim that $y^\top A$ is integral, thereby showing $y$ is the desired vector. To this end, observe that $B^{-1} AU = B^{-1} (B~\0) = (I~\0)$, implying in turn that $B^{-1}A = (I~\0)U^{-1}$. As the inverse of a square unimodular matrix, $U^{-1}$ is also unimodular and therefore has integral entries, implying in turn that $B^{-1}A$, and in particular $y^\top A$, is integral. 
\end{proof}

The reader may notice a similarity between \Cref{alternative} and its integer analogue, which characterizes when a linear system of equations admits an integral solution, commonly known as the \emph{Integer Farkas Lemma} (see \cite{ConfortiCornuejolsZambelli2014}, Theorem 1.20). We refrain from calling \Cref{alternative} the ``$p$-adic Farkas Lemma'' as we reserve that title for \Cref{pFL} below.

\begin{RE}\label{RE:pq->integral}
If $t$ is a $p$- and $q$-adic rational, for distinct primes $p,q$, then $t$ is integral.
\end{RE}

\begin{CO}\label{CO:pq->integral}
Let $A\in \mathbb{Z}^{m\times n}, b\in \mathbb{Z}^m$. If $Ax=b$ has $p$- and $q$-adic solutions, for distinct primes $p$ and $q$, then the system has an integral solution.
\end{CO}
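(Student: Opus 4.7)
The plan is to argue by contrapositive, mimicking the ``easy direction'' of the Theorem of the Alternative (\Cref{alternative}) and then combining it with \Cref{RE:pq->integral}. Specifically, suppose for contradiction that $Ax=b$ has no integral solution. By the Integer Farkas Lemma (cited just after \Cref{alternative}), there exists $y\in \mathbb{R}^m$ such that $y^\top A$ is integral while $y^\top b$ is non-integral. I would then use this $y$ to derive a contradiction with the hypothesis that $Ax=b$ has both a $p$-adic and a $q$-adic solution.

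The key step is the following simple duality-style computation, analogous to the first paragraph of the proof of \Cref{alternative}. Let $\hat x$ be a $p$-adic solution and $\tilde x$ a $q$-adic solution of $Ax=b$. Then
$$
y^\top b \;=\; y^\top(A\hat x) \;=\; (y^\top A)\,\hat x,
$$
which is an integral linear combination of $p$-adic rationals and is therefore $p$-adic. Symmetrically, $y^\top b=(y^\top A)\tilde x$ is $q$-adic. Invoking \Cref{RE:pq->integral}, the scalar $y^\top b$ must then be integral, contradicting the choice of $y$.

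I do not expect any real obstacle here: the corollary is essentially the content of \Cref{RE:pq->integral} promoted from scalars to vector solutions, and the promotion is carried out by the standard Integer Farkas certificate, exactly as \Cref{alternative} promotes the ``scalar dyadic obstruction'' to a vector-level alternative. The only thing to be slightly careful about is to apply the integer version of Farkas (to obtain a \emph{non-integral} $y^\top b$) rather than \Cref{alternative} itself (which would only produce a non-$p$-adic $y^\top b$ and therefore not interact with the $q$-adic hypothesis in a useful way). Once this distinction is kept in mind, the proof is three lines.
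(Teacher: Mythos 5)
Your proposal is correct and is essentially the paper's own argument, merely organized contrapositively: the computation $y^\top b=(y^\top A)\hat x$ showing $y^\top b$ is $p$-adic (resp.\ $q$-adic) is exactly the ``easy direction'' of \Cref{alternative} that the paper invokes, and both proofs then combine \Cref{RE:pq->integral} with the Integer Farkas Lemma. No gaps.
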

\begin{proof}
By the Theorem of the Alternative, whenever $y^\top A$ is integral, $y^\top b$ is both $p$- and $q$-adic, implying in turn that $y^\top b$ is integral by \Cref{RE:pq->integral}. Thus, by the Integer Farkas Lemma, $Ax=b$ has an integral solution.
\end{proof}

Finally, the Density Lemma and the Theorem of the Alternative have the following $p$-adic analogue of Farkas Lemma in Linear Programming.

\begin{CO}[$p$-Adic Farkas Lemma]\label{pFL}
Let $P$ be a nonempty rational polyhedron whose affine hull is $\{x:Ax=b\}$, where $A,b$ are integral. Then for every prime $p$, $P$ contains a $p$-adic point if, and only if, there does not exist $y$ such that $y^\top A$ is integral and $y^\top b$ is non-$p$-adic.
\end{CO}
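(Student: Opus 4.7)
The plan is to obtain this result as a direct combination of the two tools developed in this section, namely the Density Lemma (\Cref{density}) and the Theorem of the Alternative (\Cref{alternative}). No new machinery should be needed; the content of the corollary is precisely the observation that these two statements chain together.

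First I would invoke the Density Lemma in the form of its second conclusion: since $P$ is a nonempty rational polyhedron whose affine hull equals $\{x : Ax = b\}$ with $A,b$ integral, $P$ contains a $p$-adic point if and only if $\{x : Ax = b\}$ does, equivalently if and only if the system $Ax = b$ admits a $p$-adic solution. This reduces the question about the polyhedron $P$ to a question purely about the integral linear system defining its affine hull.

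Next I would apply the Theorem of the Alternative to the system $Ax = b$: it asserts that exactly one of the following holds, namely that $Ax = b$ has a $p$-adic solution, or there exists $y \in \mathbb{R}^m$ with $y^\top A$ integral and $y^\top b$ non-$p$-adic. Therefore $Ax = b$ has a $p$-adic solution if and only if no such $y$ exists. Chaining this equivalence with the one produced by the Density Lemma yields exactly the claimed biconditional.

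I do not anticipate any real obstacle. The only point that requires a moment of care is the hypothesis needed to apply the Density Lemma: one must know that $P$ is nonempty and rational and that its affine hull is given by an \emph{integral} system $Ax=b$, all of which are in the hypotheses. It is also worth remarking, for the reader, why the corollary deserves to be called a Farkas-type lemma: it gives a certificate of infeasibility of ``$P$ contains a $p$-adic point'' as a single vector $y$, in direct analogy with the classical Farkas Lemma and the Integer Farkas Lemma, but phrased over the $p$-adic rationals.
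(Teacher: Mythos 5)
Your proof is correct and is exactly the argument the paper intends: the corollary is stated immediately after the remark that it follows from the Density Lemma and the Theorem of the Alternative, and your chaining of the second conclusion of \Cref{density} with the dichotomy of \Cref{alternative} is precisely that derivation. No gaps.
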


\section{$p$-Adic generating sets for subspaces and cones}\label{sec:generating}

Recall that a set of vectors $\{a^1,\ldots,a^n\}\subseteq \mathbb{Z}^m$ forms a $p$-GSS if every integral vector in the linear hull of the vectors can be expressed as a $p$-adic linear combination of the vectors. Observe that every $p$-adic vector in the linear hull of a $p$-GSS can also be expressed as a $p$-adic linear combination of the vectors.

\begin{LE}\label{unimodular-transformations}
Let $A\in \mathbb{Z}^{m\times n}$, and $U$ a unimodular matrix of appropriate dimensions. Then (1) the columns of $A$ form a $p$-GSS if, and only if, the columns of $UA$ do, and (2) the columns of $A$ form a $p$-GSS if, and only if, the columns of $AU$ do.
\end{LE}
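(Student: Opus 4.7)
The plan is to exploit two elementary facts about a unimodular matrix $U$: it has integer entries, it has an integer inverse (since $U^{-1}$ is also unimodular), and consequently both $v \mapsto Uv$ and $v \mapsto U^{-1}v$ send integer vectors to integer vectors and $p$-adic vectors to $p$-adic vectors. Given these, each of (1) and (2) reduces to unwinding what ``integer vector in the span'' means after multiplying by $U$, and then transferring the $p$-adic coefficients through $U$ or $U^{-1}$.

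For part (1), where $U$ is $m \times m$, I would first observe that $\cspn{UA} = U \cdot \cspn{A}$ and that, because $U$ is a bijection of $\mathbb{Z}^m$ onto itself, the integer vectors in $\cspn{UA}$ are exactly $\{Uz : z \in \cspn{A} \cap \mathbb{Z}^m\}$. Then for the forward direction, given an integer $v \in \cspn{UA}$, set $z := U^{-1}v$, which is integer and lies in $\cspn{A}$; by the $p$-GSS hypothesis write $z = A\lambda$ with $\lambda$ a $p$-adic vector, and then $v = Uz = (UA)\lambda$ exhibits $v$ as a $p$-adic combination of the columns of $UA$. The reverse direction is identical after swapping the roles of $A$ and $UA$, using that $U^{-1}$ is again unimodular.

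For part (2), where $U$ is $n \times n$, the key preliminary observation is that $\cspn{AU} = \cspn{A}$ (as subspaces of $\mathbb{R}^m$), since $U$ and $U^{-1}$ have rational entries. So both $p$-GSS conditions concern the \emph{same} set of candidate integer vectors. For ``only if'', an integer $v$ in this common span satisfies $v = A\lambda$ with $\lambda$ $p$-adic; rewriting $v = (AU)(U^{-1}\lambda)$ and noting that $U^{-1}\lambda$ is $p$-adic (integer matrix times $p$-adic vector), we obtain the required $p$-adic representation relative to $AU$. For ``if'', $v = (AU)\mu = A(U\mu)$ with $U\mu$ $p$-adic, and we are done.

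There is no real obstacle here; the proof is essentially two symmetric bookkeeping arguments. The only content beyond the definitions is the closure of $p$-adic vectors under multiplication by an integer matrix, together with the fact that the inverse of a unimodular matrix is unimodular, which allows the forward and reverse directions to be proved by the same argument with $U$ and $U^{-1}$ interchanged.
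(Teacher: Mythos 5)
Your proof is correct and follows essentially the same route as the paper's: both directions of each part are handled by transferring a $p$-adic solution of one system to the other via multiplication by $U$ or $U^{-1}$, using that unimodular matrices and their inverses are integral and hence preserve integrality and $p$-adicness. The only cosmetic difference is that you phrase part (2) via the observation $\cspn{AU}=\cspn{A}$ and treat both directions explicitly, whereas the paper notes that one direction suffices because $U^{-1}$ is again unimodular.
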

\begin{proof}
{\bf (1)}
It suffices to prove $(\Rightarrow)$, since $U^{-1}$ is also unimodular. Pick $b\in \mathbb{Z}^m$ such that $(UA)x=b$ has a solution $\bar{x}$; we need to show that the system has a $p$-adic solution. Note that $A\bar{x}=U^{-1}b\in \mathbb{Z}^m$, so by assumption, there exists a $p$-adic $x^\star$ such that $Ax^\star=U^{-1}b$. Left-multiplying by $U$, we get that $(UA)x^\star=b$, so $x^\star$ is the desired $p$-adic solution. {\bf (2)} Once again, it suffices to prove $(\Rightarrow)$. Pick $b\in \mathbb{Z}^m$ such that $(AU)z=b$ has a solution $\bar{z}$; we need to show that the system has a $p$-adic solution. In particular, $Ax=b$ has a solution, namely $U\bar{z}$, so by assumption, there exists a $p$-adic $x^\star$ such that $Ax^\star=b$. Let $z^\star=U^{-1}x^\star$, which is also $p$-adic. Then $(AU)z^\star=Ax^\star=b$, so $z^\star$ is the desired $p$-adic solution.
\end{proof}

In order to prove \Cref{pGSS-char}, we need a definition. Let $A$ be an integral matrix of rank $r$. It is well-known that by applying elementary row \emph{and} column operations, we can bring $A$ into \emph{Smith normal form}, that is, into a matrix with a leading $r\times r$ minor $D$ and zeros everywhere else, where $D$ is a diagonal matrix with diagonal entries $\delta_1,\ldots,\delta_r\geq 1$ such that $\delta_1 \mid \delta_2 \mid \cdots \mid \delta_r$ (see \cite{Schrijver98}, Section 4.4). It can be readily checked that for each $i\in [r]$, $\prod_{j=1}^{i}\delta_j$ is the GCD of the subdeterminants of $A$ of order~$i$. The $\delta_i$'s are referred to as the \emph{elementary divisors}, or \emph{invariant factors}, of $A$. The Smith normal form of an integral matrix, and therefore its elementary divisors, can be computed in polynomial time~\cite{Kannan79}.
%
%\begin{theorem}\label{DGSS-char}
%Let $A\in \mathbb{Z}^{m\times n}$ be a matrix of rank $r$. Then the following are equivalent,
%%
%\begin{enumerate}
%\item[1.] the columns of $A$ form a DGSS,
%\item[2.] the rows of $A$ form a DGSS,
%\item[3.] whenever $y^\top A$ and $Ax$ are integral, then $y^\top Ax$ is dyadic,
%\item[4.] every elementary divisor of $A$ is a power of $2$,
%\item[5.] the GCD of the subdeterminants of $A$ of order $r$ is a power of $2$,
%\item[6.] there exists a dyadic matrix $B$ such that $ABA=A$.
%\end{enumerate}
%\end{theorem}
\begin{proof}[Proof of \Cref{pGSS-char}]
{\bf (1) $\Leftrightarrow$ (3)} 
Suppose (1) holds. Choose $x,y$ such that $y^\top A$ and $Ax$ are integral. Let $b:=Ax\in \mathbb{Z}^m$. By (1), there exists a $p$-adic $\bar{x}$ such that $b=A\bar{x}$. Thus, $y^\top Ax=y^\top A\bar{x}=(y^\top A)\bar{x}$, which is $p$-adic because $y^\top A$ is integral and $\bar{x}$ $p$-adic, as required. Suppose conversely that (3) holds. Pick $b\in \mathbb{Z}^m$ such that $A\bar{x}=b$ for some $\bar{x}$. We need to prove that $Ax=b$ has a $p$-adic solution. If $y^\top A$ is integral, then $y^\top b= y^\top A\bar{x}$, which is $p$-adic by (3). Thus, by the Theorem of the Alternative, $Ax=b$ has a $p$-adic solution, as required.

{\bf (2) $\Leftrightarrow$ (3)} holds by applying the established equivalence (1) $\Leftrightarrow$ (3) to $A^\top$.

{\bf (1)-(3) $\Leftrightarrow$ (4)}: By \Cref{unimodular-transformations}, the equivalent conditions (1)-(3) are preserved under elementary unimodular row/column operations; these operations clearly preserve (4) as well. Thus, it suffices to prove the equivalence between (1)-(3) and (4) for integral matrices in Smith normal form. That is, we may assume that $A$ has a leading $r\times r$ minor $D$ and zeros everywhere else, where $D$ is a diagonal matrix with diagonal entries $\delta_1,\ldots,\delta_r\geq 1$ such that $\delta_1 \mid \delta_2 \mid \cdots \mid \delta_r$. 
Suppose (1)-(3) hold. We need to show that each $\delta_i$ is a power of $p$. Consider the feasible system $Ax=e_i$; every solution $x$ to this system satisfies $x_j=0, j\in [r]-\{i\}$ and $x_i=\frac{1}{\delta_i}$. Since the columns of $A$ form a $p$-GSS, $\frac{1}{\delta_i}$ must be $p$-adic, so $\delta_i$ is a power of $p$, as required.
Suppose conversely that (4) holds. We need to show that whenever $Ax=b,b\in \mathbb{Z}^m$ has a solution, then it has a $p$-adic solution. Clearly, it suffices to prove this for $b=e_i,i\in [r]$, which holds because each $\delta_i,i\in [r]$ is a power of $p$.

{\bf (4) $\Leftrightarrow$ (5)} is rather immediate; the only additional remark is that every divisor of a power of $p$ is also a power of $p$.

{\bf (6) $\Rightarrow$ (3)} If $y^\top A$ and $Ax$ are integral, then $y^\top Ax=y^\top (ABA)x=(y^\top A)B(Ax)$, which is $p$-adic since $y^\top A,Ax$ are integral and $B$ has $p$-adic entries, as required.

{\bf (4) $\Rightarrow$ (6)} Choose unimodular matrices $U,W$ such that $UAW$ is in Smith normal form with elementary divisors $\delta_1,\ldots,\delta_r$. Let $B'$ be the $n\times m$ matrix with a leading diagonal matrix $D^{-1}=\Diag(\frac{1}{\delta_1},\ldots,\frac{1}{\delta_r})$, and zeros everywhere else. Let $B:=WB'U$, which is a matrix with $p$-adic entries since each $\delta_i$ is a power of $p$. We claim that $ABA=A$, thereby proving (6). This equality holds if, and only if, $UABAW=UAW$. To this end, we have $$
UABAW=UA(WB'U)AW=(UAW)B'(UAW)=UAW
$$ where the last equality holds due to the definition of $B'$ and the Smith normal form of $UAW$.
\end{proof}

In light of the previous proposition we may say that an integral \emph{matrix} forms a $p$-GSS if its rows, respectively its columns, form a $p$-GSS. Consider the following complexity problem: \emph{$(A)$ Given an integral matrix, does it form a $p$-GSS?}

\begin{theorem}\label{pGSS-P}
$(A)$ belongs to $\mathrm{P}$.
\end{theorem}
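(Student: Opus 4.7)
The plan is to use the equivalence (1) $\Leftrightarrow$ (4) from \Cref{pGSS-char}: the columns of $A$ form a $p$-GSS if and only if every elementary divisor of $A$ is a power of $p$. Since we can compute the Smith normal form of an integral matrix in polynomial time \cite{Kannan79}, we can extract the list of elementary divisors $\delta_1,\ldots,\delta_r$ in polynomial time, and then it remains to verify that each $\delta_i$ is a power of $p$.

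The latter test is straightforward: given a positive integer $\delta$, one repeatedly divides by $p$; $\delta$ is a power of $p$ precisely when this process terminates at $1$, and otherwise one detects a nonzero residue modulo $p$ after at most $\lceil \log_p \delta\rceil$ steps. Since the bit-size of each $\delta_i$ is polynomially bounded in the bit-size of $A$ (the elementary divisors divide subdeterminants of $A$), this check runs in polynomial time per divisor, and hence in polynomial time overall.

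Putting it together, the algorithm is: (i) compute the Smith normal form of $A$ in polynomial time; (ii) read off the elementary divisors $\delta_1,\ldots,\delta_r$; (iii) for each $i$, test whether $\delta_i$ is a power of $p$ by the division procedure above; (iv) accept if and only if all tests succeed. Correctness follows from the equivalence in \Cref{pGSS-char}, and the overall running time is polynomial in the bit-size of $A$ and $p$.

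There is no real obstacle once \Cref{pGSS-char} and the polynomial-time computability of the Smith normal form are in hand; the only point that deserves a brief comment is the polynomial bound on the size of the elementary divisors, which is standard since $\prod_{j=1}^{i} \delta_j$ equals the GCD of subdeterminants of $A$ of order $i$ and is therefore bounded by any such subdeterminant.
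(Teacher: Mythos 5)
Your proposal is correct and follows essentially the same route as the paper: invoke the equivalence with condition (4) of \Cref{pGSS-char} and use the polynomial-time computability of the Smith normal form~\cite{Kannan79}. The extra details you supply (the repeated-division test for powers of $p$ and the bit-size bound on the elementary divisors) are welcome but standard, and the paper leaves them implicit.
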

\begin{proof}
Let $A$ be an integral matrix. By \Cref{pGSS-char}, $A$ forms a $p$-GSS if, and only if, each elementary divisor of $A$ is a power of $p$. Since the elementary divisors of $A$ can be found in polynomial time, the theorem follows.
\end{proof}

Recall that a set of vectors $\{a^1,\ldots,a^n\}\subseteq \mathbb{Z}^m$ forms a $p$-GSC if every integral vector in the conic hull of the vectors can be expressed as a $p$-adic conic combination of the vectors. Observe that every $p$-adic vector in the conic hull of a $p$-GSC can also be expressed as a $p$-adic conic combination of the vectors.

\begin{PR}\label{pGSC->pGSS}
If $\{a^1,\ldots,a^n\}\subseteq \mathbb{Z}^m$ is a $p$-GSC, then it is a $p$-GSS.
\end{PR}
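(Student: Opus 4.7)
The plan is to reduce the $p$-GSS problem for $\{a^1,\ldots,a^n\}$ to an instance of the $p$-GSC property by shifting an arbitrary linear combination into the conic hull via a large positive integer multiple of $\sum_i a^i$.

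Concretely, let $b\in\mathbb{Z}^m$ lie in the linear hull of $\{a^1,\ldots,a^n\}$, so $b=\sum_{i=1}^n \lambda_i a^i$ for some real scalars $\lambda_i$. I would pick a positive integer $M$ large enough that $\lambda_i + M \geq 0$ for every $i\in[n]$. Then the vector
\[
b + M\sum_{i=1}^n a^i \;=\; \sum_{i=1}^n (\lambda_i+M)\,a^i
\]
is a nonnegative real combination of the $a^i$, hence lies in $\cone\{a^1,\ldots,a^n\}$. Moreover, it is integral, since both $b$ and $M\sum_i a^i$ are integral.

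Invoking the $p$-GSC hypothesis yields $p$-adic nonnegative scalars $\mu_1,\ldots,\mu_n$ with
\[
b + M\sum_{i=1}^n a^i \;=\; \sum_{i=1}^n \mu_i\, a^i.
\]
Rearranging gives $b=\sum_{i=1}^n (\mu_i - M)\,a^i$, and each coefficient $\mu_i - M$ is a $p$-adic rational because $\mu_i$ is $p$-adic and $M$ is an integer. Hence $\{a^1,\ldots,a^n\}$ is a $p$-GSS, as required.

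There is no real obstacle here: the only point to verify is that ``large enough $M$'' exists, which is immediate since $n$ is finite and we may choose $M$ to be any integer exceeding $\max_i(-\lambda_i)$. The argument uses nothing beyond the definitions of $p$-GSC and $p$-GSS and the fact that integer translates of $p$-adic numbers remain $p$-adic.
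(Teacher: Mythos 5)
Your proof is correct and follows essentially the same strategy as the paper: translate the coefficient vector by an integral amount so it becomes nonnegative (the paper subtracts the componentwise floor $\lfloor\bar{x}\rfloor$ where you add a uniform $M\cdot\1$), apply the $p$-GSC property to the resulting integral point of the cone, and translate back. The difference is purely cosmetic.
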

\begin{proof}
Let $A\in \mathbb{Z}^{m\times n}$ be the matrix whose columns are $a^1,\ldots,a^n$. Take $b\in \mathbb{Z}^m$ such that $A\bar{x}=b$ for some $\bar{x}$. We need to show that the system $Ax=b$ has a $p$-adic solution. To this end, let $\bar{x}':=\bar{x}-\lfloor \bar{x}\rfloor\geq \0$ and $b':=A\bar{x}'=b-A\lfloor \bar{x}\rfloor\in \mathbb{Z}^m$. Thus, $Ax=b',x\geq \0$ has a solution, namely $\bar{x}'$, so it has a $p$-adic solution, say $\bar{z}'$, as the columns of $A$ form a $p$-GSC. Let $\bar{z}:=\bar{z}'+\lfloor \bar{x}\rfloor$, which is also $p$-adic. Then $A\bar{z}=A\bar{z}'+A\lfloor \bar{x}\rfloor=b'+A\lfloor \bar{x}\rfloor=b$, so $\bar{z}$ is a $p$-adic solution to $Ax=b$, as required.
\end{proof}

The converse of this result, however, does not hold. For example, let $k\geq 3$ be an integer, $n:=p^k+1$, and $m$ an integer in $\{4,\ldots,p^{k}\}$ such that $m-1$ is not a power of $p$. Consider the matrix 
$$
A:=\begin{pmatrix}
\begin{matrix}E_n-I_n\end{matrix}
&\rvline &
\begin{matrix}E_m-I_m\\ \hline \0\end{matrix}
\end{pmatrix}
$$ where $E_d,I_d$ denote the all-ones square and identity matrices of dimension $d$, respectively. We claim that the columns of $A$ form a $p$-GSS but not a $p$-GSC. To see the former, note that $A$ has rank $n$, and since $\det(E_n-I_n)=n-1=p^k$, the GCD of the subdeterminants of $A$ of order $n$ is a power of $p$, so the columns of $A$ form a $p$-GSS by \Cref{pGSS-char}. To see the latter, consider the vector $b\in \{0,1\}^n$ whose first $m$ entries are equal to $1$, and whose last $n-m$ entries are equal to $0$. Then $Ay=b,y\geq \0$ has a unique solution, namely $\bar{y}$ defined as $\bar{y}_i=0$ for $1\leq i\leq n$, and $\bar{y}_i=\frac{1}{m-1}$ for $n+1\leq i\leq n+m$. In particular, as $m-1$ is not a power of $p$, $b$ is an integral vector in the conic hull of the columns of $A$, but it cannot be expressed as a $p$-adic conic combination of the columns. Thus, the columns of $A$ do not form a $p$-GSC.\footnote{Smaller counterexamples exist; for instance, the vectors $(0,1,1,1,1),(1,0,1,1,1),(1,1,0,1,1),(1,1,1,0,1),$ $(1,1,1,1,0),(3,0,1,1,1)$ form a DGSS but not a DGSC.}

However, we do have the following sort of converse.

\begin{RE}\label{pGSS->pGSC}
If $\{a^1,\ldots,a^n\}\subseteq \mathbb{Z}^m$ is a $p$-GSS, then $\{\pm a^1,\ldots,\pm a^n\}$ is a $p$-GSC.
\end{RE}

\begin{PR}\label{pGSC-face}
Let $\{a^1,\ldots,a^n\}\subseteq \mathbb{Z}^m$ be a $p$-adic generating set for a cone, and $F$ a nonempty face of the cone. Then $\{a^i:a^i\in F\}$ is a $p$-adic generating set for the cone $F$.
\end{PR}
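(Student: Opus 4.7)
The plan is to exploit the hyperplane description of faces of polyhedral cones together with the $p$-GSC property applied to the full cone $C:=\cone\{a^1,\ldots,a^n\}$. Since $F$ is a nonempty face of $C$, there is a vector $w\in\mathbb{R}^m$ with $w^\top x\geq 0$ for all $x\in C$ and $F=\{x\in C:w^\top x=0\}$; equivalently, $w^\top a^i\geq 0$ for every $i\in[n]$, and $F=\{x\in C: w^\top x = 0\}$. Set $I:=\{i\in[n]:w^\top a^i=0\}=\{i:a^i\in F\}$. The key observation will be that in any nonnegative combination of the $a^i$ that lies in $F$, the indices $i\notin I$ must carry zero weight.

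First I would verify $F=\cone\{a^i:i\in I\}$. The inclusion $\supseteq$ is immediate since each $a^i$ with $i\in I$ lies in $F$. For $\subseteq$, take $x\in F$ and write $x=\sum_{i=1}^n \mu_i a^i$ with $\mu_i\geq 0$ (possible as $x\in C$). Then $0=w^\top x=\sum_i \mu_i(w^\top a^i)$, and since every summand is nonnegative, $\mu_i(w^\top a^i)=0$ for all $i$. Hence $\mu_i=0$ whenever $i\notin I$, and $x=\sum_{i\in I}\mu_i a^i$.

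The proposition itself then follows by the same weight-killing trick applied to a \emph{$p$-adic} representation. Given an integral $b\in F\subseteq C$, the $p$-GSC hypothesis on $\{a^1,\ldots,a^n\}$ provides $p$-adic scalars $\lambda_i\geq 0$ with $b=\sum_{i=1}^n \lambda_i a^i$. Computing $0=w^\top b=\sum_i \lambda_i(w^\top a^i)$ and arguing exactly as above forces $\lambda_i=0$ for each $i\notin I$, so $b=\sum_{i\in I}\lambda_i a^i$ is a $p$-adic conic combination using only generators lying in $F$. Combined with the first step, this shows $\{a^i:i\in I\}$ is a $p$-GSC for the cone $F$.

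There is no serious obstacle here: once the separating functional $w$ is in hand, both the face identity $F=\cone\{a^i:i\in I\}$ and the $p$-adic representability of integral $b\in F$ reduce to the same nonnegativity argument. The only thing to be careful about is to apply this argument to the already-$p$-adic representation furnished by the $p$-GSC hypothesis, rather than trying to manipulate an arbitrary representation into a $p$-adic one after the fact.
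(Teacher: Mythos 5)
Your proposal is correct and is essentially the paper's own proof: the paper also takes an integral $b\in F$, expresses it as a $p$-adic conic combination of all the generators, and observes that membership of $b$ in the face forces the coefficients of generators outside $F$ to vanish. You merely make explicit, via the supporting functional $w$, the standard face argument that the paper leaves implicit.
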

\begin{proof}
Let $b$ be an integral vector in the face $F$. Since $b\in C$, we can write $b$ as a $p$-adic conic combination of the vectors in $\{a^1,\ldots,a^n\}$. However, since $b$ is contained in the face $F$, the conic combination can only assign nonzero coefficients to the vectors in $F$, implying in turn that $b$ is a $p$-adic conic combination of the vectors in $\{a^i:a^i\in F\}$. As this holds for every $b$, $\{a^i:a^i\in F\}$ forms a $p$-GSC.
\end{proof}

%\begin{theorem}\label{DGSC-char}
%Let $\{a^1,\ldots,a^n\}$ be a set of integral vectors, and let $C$ be their conic hull. Then the following statements are equivalent: \begin{enumerate}
%\item $\{a^1,\ldots,a^n\}$ is a DGSC,
%\item for every nonempty face $F$ of $C$, $\{a^i:a^i\in F\}$ is a DGSC,
%\item for every nonempty face $F$ of $C$, $\{a^i:a^i\in F\}$ is a DGSS.
%\end{enumerate}
%\end{theorem}
\begin{proof}[Proof of \Cref{pGSC-char}]
$(\Rightarrow)$ follows from \Cref{pGSC-face} and \Cref{pGSC->pGSS}.
$(\Leftarrow)$ Let $b$ be an integral vector in $C$, and $F$ the minimal face of $C$ containing $b$. Let $B$ be the matrix whose columns are the vectors $\{a^i:a^i\in F\}$. We need to show that $Q:=\{y: By=b,y\geq \0\}$, which is nonempty, contains a $p$-adic point. By the Density Lemma, it suffices to show that $\aff(Q)$, the affine hull of $Q$, contains a $p$-adic point. Our minimal choice of $F$ implies that $Q$ contains a point $\mathring{y}$ such that $\mathring{y}>\0$, implying in turn that $\aff(Q)=\{y:By=b\}$. As the columns of $B$ form a $p$-GSS, and $b$ is integral, it follows that $\aff(Q)$ contains a $p$-adic point, as required.
\end{proof}

Consider the following complexity problem: \emph{$(B)$ Given a set of vectors, does it form a $p$-GSC?}

\begin{theorem}\label{pGSC-co-NP}
$(B)$ belongs to co-NP.
\end{theorem}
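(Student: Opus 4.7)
The plan is to use \Cref{pGSC-char} to reduce the problem to verifying, over all faces of the cone, that certain subsystems form $p$-GSSs. Since the latter is decidable in polynomial time by \Cref{pGSS-P}, and faces of the cone can be certified by supporting hyperplanes in polynomial time, the whole problem lands in co-NP.

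In detail, given the input $\{a^1,\ldots,a^n\}\subseteq \mathbb{Z}^m$, set $C:=\cone\{a^1,\ldots,a^n\}$. By \Cref{pGSC-char}, the set fails to be a $p$-GSC if and only if there is a nonempty face $F$ of $C$ such that $\{a^i : a^i \in F\}$ is not a $p$-GSS. A ``no'' certificate will therefore consist of a subset $S\subseteq [n]$ with the properties (a) $\{a^i : i\in S\}=\{a^i : a^i\in F\}$ for some nonempty face $F$ of $C$, and (b) $\{a^i : i\in S\}$ is not a $p$-GSS.

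The verification runs in polynomial time. Condition (b) is exactly the complement of problem $(A)$, so it can be decided in polynomial time via \Cref{pGSS-P} (equivalently, by computing the elementary divisors of the matrix with columns $\{a^i : i\in S\}$ and checking that at least one is not a power of $p$). Condition (a) reduces to the linear programming feasibility question: does there exist $c\in\mathbb{R}^m$ with $c^\top a^i = 0$ for $i\in S$ and $c^\top a^i \geq 1$ for $i\notin S$? Indeed, such a $c$ witnesses that the hyperplane $\{x : c^\top x = 0\}$ supports $C$ along the face $F:=\{x\in C : c^\top x = 0\}$, and $S$ is exactly the set of generators lying on $F$; conversely, any face of $C$ arises this way. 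This LP feasibility can be tested in polynomial time in the bit length of the $a^i$'s.

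The only subtlety is that the certificate $S$ itself has size at most $n$, hence polynomial; the auxiliary vector $c$ need not be part of the certificate since its existence is checked by an LP solver in polynomial time. The main conceptual content has already been established in \Cref{pGSC-char} and \Cref{pGSS-P}; the remaining work is merely to package a face of $C$ as the zero-set of a support functional, which is standard polyhedral theory. No real obstacle is anticipated.
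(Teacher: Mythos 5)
Your proposal is correct and follows essentially the same route as the paper: invoke \Cref{pGSC-char} to reduce non-membership to the existence of a face whose incident generators fail to be a $p$-GSS, use the subset of generators as the certificate, and verify via \Cref{pGSS-P}. The only (harmless) difference is that the paper includes the supporting hyperplane as part of the certificate, whereas you recover its existence by an LP feasibility check, which is equally valid.
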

\begin{proof}
Suppose a set of integral vectors $\{a^1,\ldots,a^n\}$, whose conic hull is denoted $C$, does not a $p$-GSC. By \Cref{pGSC-char}, there exists a face $F=C\cap \{x:a^\top x\geq 0\}$ such that $S:=F\cap \{a^1,\ldots,a^n\}$ does not form a $p$-GSS. The subset $S$, along with the face provided by its supporting hyperplane $a^\top x\geq 0$, can be provided as a certificate that $\{a^1,\ldots,a^n\}$ is not a $p$-GSC. Testing that $S$ is not a $p$-GSS can be done in polynomial time by \Cref{pGSS-P}, so the result follows.
\end{proof}

%%%%%%%%%%%%%%%%%%
\section{Totally dual $p$-adic systems}\label{sec:TDp}

Given integral $A,b$, recall that $Ax\leq b$ is totally dual $p$-adic if for every integral $w$ for which $\min\{b^\top y:A^\top y=w,y\geq\0\}$ has an optimal solution, it has a $p$-adic optimum. It can be readily checked that the rows of $A$ form a $p$-GSS if, and only if, $Ax= \0$ is totally dual $p$-adic; and the rows of $A$ form a $p$-GSC if, and only if, $Ax\leq \0$ is totally dual $p$-adic.
%\begin{proof}
%$Ax=\0$ is totally dual $p$-adic if, and only if, $\min\{0:A^\top y=w\}$, whenever feasible, has a $p$-adic feasible solution. This equivalence proves (1). Similarly, $Ax\leq \0$ is totally dual $p$-adic if, and only if, $\min\{0:A^\top y=w, y\geq \0\}$, whenever feasible, has a $p$-adic feasible solution. This equivalence proves (2).
%\end{proof}

%\begin{theorem}\label{TDD-char}
%Let $P=\{x:Ax\leq b\}$ be a polyhedron where $A$ is an $m\times n$ integral matrix and $b$ is an integral vector.
%Given a face $F$ of $P$ we denote by $A_Fx\leq b_F$ the subsystem of $Ax\leq b$ that correspond to all implicit equalities of $F$.
%Then the following are equivalent:
%\begin{enumerate}
%\item $Ax\leq b$ is TDD,
%\item for every nonempty face $F$ of $P$, the rows of $A_F$ form a DGSC,
%\item for every nonempty face $F$ of $P$, the rows of $A_F$ form a DGSS.
%\end{enumerate}
%\end{theorem}
\begin{proof}[Proof of \Cref{TDp-char}]
Consider the following pair of dual linear programs, for $w$ later specified.
\begin{align}
&\max\{w^\top x: Ax\leq b\} \tag{P}\label{P-tdd-2} \\
&\min\{b^\top y: A^\top y=w, y\geq\0\} \tag{D}\label{D-tdd-2}.
\end{align}
For every nonempty face $F$ of $P$, denote by $A_{\bar{F}}$ the row submatrix of $A$ corresponding to the rows not in $A_F$. For every vector $y$, denote by $y_F,y_{\bar{F}}$ the variables corresponding to the rows in $A_F,A_{\bar{F}}$, respectively.

{\bf (1) $\Rightarrow$ (2)}
Consider a nonempty face $F$ of $P$. We need to show that the rows of $A_F$ form a $p$-GSC. Let $w$ be an integral vector in the conic hull of the rows of $A_F$. It suffices to express $w$ as a $p$-adic conic combination of the rows of $A_F$. To this end, observe that every point in $F$ is an optimal solution to \eqref{P-tdd-2}. As $Ax\leq b$ is TDD, \eqref{D-tdd-2} has a $p$-adic optimal solution, say $\bar{y}\geq \0$. As Complementary Slackness holds for all pairs $(\bar{x},\bar{y}), \bar{x}\in F$, it follows that $\bar{y}_{\bar{F}}=\0$. Subsequently, we have $w=A^\top \bar{y} = A_F^\top \bar{y}_F$, thereby achieving our objective. 
{\bf (2) $\Rightarrow$ (3)} follows from \Cref{pGSC->pGSS}.
{\bf (3) $\Rightarrow$ (1)}
Choose an integral $w$ for which \eqref{D-tdd-2} has an optimal solution; we need to show now that it has a $p$-adic optimal solution. Denote by $F$ the face of the optimal solutions to the primal linear program \eqref{P-tdd-2}. By Complementary Slackness, the set of optimal solutions to the dual \eqref{D-tdd-2} is $Q:=\{y:A^\top y=w, y\geq \0, y_{\bar{F}}=\0\}$. We need to show that $Q$ contains a $p$-adic point. In fact, by the Density Lemma, it suffices to find a $p$-adic point in $\aff(Q)$, the affine hull of $Q$. By Strict Complementarity, $Q$ contains a point $\mathring{y}$ such that $\mathring{y}_F>\0$, implying in turn that $\aff(Q)=\{y:A_F^\top y_F=w, y_{\bar{F}}=\0\}$. Since the rows of $A_F$ form a $p$-GSS, and $w$ is integral, we get that $\aff(Q)$ contains a $p$-adic point, as required.
\end{proof}

The careful reader may notice that by applying polarity to \Cref{TDp-char} with $b=\0$, we obtain another proof of \Cref{pGSC-char}. Moving on, consider the following complexity problem: \emph{$(C)$ Given a system $Ax\leq b$ where $A,b$ are integral, is the system totally dual $p$-adic?}

\begin{theorem}
$(C)$ belongs to co-NP.
\end{theorem}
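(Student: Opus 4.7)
The plan is to exploit \Cref{TDp-char}, which reduces total dual $p$-adicness of $Ax\leq b$ to a statement over the finitely many nonempty faces of $P:=\{x:Ax\leq b\}$: namely, that for every nonempty face $F$, the rows of $A_F$ form a $p$-GSS. Negating this equivalence, $Ax\leq b$ fails to be totally dual $p$-adic if and only if there exists a nonempty face $F$ such that the rows of $A_F$ do \emph{not} form a $p$-GSS. Since a face is determined by its set of implicit equalities, this suggests that a candidate for a co-NP certificate is a subset $I\subseteq [m]$ intended to describe the implicit equalities of such an offending face.

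Given such a candidate $I$, a polynomial-time verifier would perform two checks. First, that $I$ is exactly the set of implicit equalities of some nonempty face of $P$: equivalently, that there exists $\hat{x}$ with $A_I\hat{x}=b_I$, $A\hat{x}\leq b$, and $A_j\hat{x}<b_j$ for every $j\notin I$. This is decidable in polynomial time via a single linear program—for instance, maximize $\delta$ subject to $A_Ix=b_I$, $A_jx+\delta\leq b_j$ for all $j\notin I$, and check whether the optimum is strictly positive. Second, that the rows of $A_I$ do \emph{not} form a $p$-GSS; by \Cref{pGSS-P}, this can be tested in polynomial time by computing the elementary divisors of $A_I$ and checking whether any one of them fails to be a power of $p$.

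Correctness then follows directly from \Cref{TDp-char}. If $Ax\leq b$ is not totally dual $p$-adic, then some nonempty face $F$ violates condition~(3) of that theorem; taking $I$ to be the indices of the rows of $A_F$ produces a certificate of size $O(m)$ passing both checks. Conversely, any $I$ passing both checks identifies a nonempty face of $P$ whose implicit-equality rows fail the $p$-GSS property, so by \Cref{TDp-char} the system is not totally dual $p$-adic. The only subtle point in the argument is insisting that $I$ coincide with the full set of implicit equalities of the face, rather than with an arbitrary subset of tight rows: without this requirement $A_I$ need not equal $A_F$ for any face $F$, and the appeal to \Cref{TDp-char} could fail. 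The linear programming check in the previous paragraph is precisely what enforces this alignment, and is the main place one has to be careful; the remainder of the argument is bookkeeping.
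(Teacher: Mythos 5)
Your proof is correct and follows essentially the same route as the paper: both certify non--total-dual-$p$-adicness by exhibiting a nonempty face $F$ whose implicit-equality rows $A_F$ fail to form a $p$-GSS, invoking \Cref{TDp-char} for correctness and \Cref{pGSS-P} for polynomial-time verification of the certificate. Your choice of an index set $I$ together with a linear program confirming that $I$ is exactly the implicit-equality set of a nonempty face is just a more explicit rendering of the paper's certificate, which presents the face via a supporting hyperplane instead.
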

\begin{proof}
Suppose $Ax\leq b$ is not totally dual $p$-adic for some integral $A,b$. Let $P:=\{x:Ax\leq b\}$.
By \Cref{TDp-char}, there exists a nonempty face $F:=P\cap \{x:a^\top x\leq \beta\}$ of $P$ such that the rows of $A_F$ do not form a $p$-GSS. The rows $A_F$, along with the face provided by the supporting hyperplane $a^\top x\leq \beta$, can be given as a certificate that $Ax\leq b$ is not totally dual $p$-adic. Testing that the rows of $A_F$ do not form a $p$-GSS can be done in polynomial time by \Cref{pGSS-P}, so the result follows.
\end{proof}

\section{$p$-Adic polyhedra}\label{sec:polyhedra}

A nonempty rational polyhedron is \emph{$p$-adic} if every nonempty face contains a $p$-adic point. In this section we provide a characterization of $p$-adic polyhedra.

\begin{RE}\label{free-non-neg}
Let $A\in \mathbb{Z}^{m\times n},b\in \mathbb{Z}^m,y\in\mathbb{R}^m$ and $y':=y-\lfloor y\rfloor\geq\0$. Then 
$A^\top y\in \mathbb{Z}^n$ if and only if $A^\top y'\in \mathbb{Z}^n$,
$y$ is $p$-adic if and only if $y'$ is $p$-adic, and
$b^\top y$ is $p$-adic if and only if $b^\top y'$ is $p$-adic.
\end{RE}
%\begin{proof}
%(a) $A^\top y'=A^\top y-A^\top\lfloor y\rfloor$ where $A^\top\lfloor y\rfloor$ is integral since $A$ and $\lfloor y\rfloor$ are integral.
%(b) since $\lfloor y\rfloor$ is integral.
%(c) $b^\top y'=b^\top y-b^\top\lfloor y\rfloor$ where $b^\top\lfloor y\rfloor$ is integral since $b$ and $\lfloor y\rfloor$ are integral.
%\end{proof}

\begin{theorem}\label{p-adic-polyhedra}
Let $A\in \mathbb{Z}^{m\times n},b\in \mathbb{Z}^m$ and
$P:=\{x:Ax\leq b\}$. Then the following are equivalent for a prime $p$:
(1) $P$ is a $p$-adic polyhedron,
(2) for every nonempty face $F$ of $P$, $\aff(F)$ contains a $p$-adic point,
(3) for every nonempty face $F$ of $P$, and $z$, if $A_F^\top z$ is integral then $b_F^\top z$ is $p$-adic,
(4) for all $w\in\mathbb{R}^n$ for which $\max\{w^\top x:x\in P\}$ has an optimum, it has a $p$-adic optimal solution,
(5) for all $w\in\mathbb{Z}^n$ for which $\max\{w^\top x:x\in P\}$ has an optimum, it has a $p$-adic optimal value.
\end{theorem}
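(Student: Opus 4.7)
The plan is to establish the cycle
\[
(1) \;\Rightarrow\; (4) \;\Rightarrow\; (5) \;\Rightarrow\; (3) \;\Leftrightarrow\; (2) \;\Leftrightarrow\; (1).
\]
The equivalence $(1)\Leftrightarrow(2)$ is a direct application of the Density Lemma (\Cref{density}): a nonempty rational polyhedron contains a $p$-adic point if and only if its affine hull does, so applying this to every nonempty face $F$ of $P$ gives the equivalence. For $(2)\Leftrightarrow(3)$, I would use the standard fact that $\aff(F) = \{x : A_F x = b_F\}$, i.e.\ the implicit equalities of $F$ cut out its affine hull; then the Theorem of the Alternative (\Cref{alternative}) applied to $A_F x = b_F$ immediately says that $\aff(F)$ contains a $p$-adic point precisely when no $z$ with $A_F^\top z$ integral has $b_F^\top z$ non-$p$-adic.

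For $(1)\Rightarrow(4)$, I would invoke the fact that whenever $\max\{w^\top x : x\in P\}$ attains its optimum, its set of optimal solutions is a nonempty face $F^\star$ of $P$; by (1) this face contains a $p$-adic point, yielding a $p$-adic optimal solution. The implication $(4)\Rightarrow(5)$ is essentially free: for integral $w$, any $p$-adic optimal $x^\star$ gives $w^\top x^\star$ as a $\mathbb Z$-linear combination of $p$-adic rationals, hence $p$-adic.

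The main obstacle, and the only place a nontrivial construction is needed, is $(5)\Rightarrow(3)$. Here, given a nonempty face $F$ and a $z$ with $A_F^\top z \in \mathbb Z^n$, I want to exhibit an integral objective whose LP-optimum equals $b_F^\top z$ so that (5) forces the latter to be $p$-adic. First, by \Cref{free-non-neg} applied to $A_F,b_F$, replacing $z$ by $z-\lfloor z \rfloor$ preserves both the integrality of $A_F^\top z$ and the $p$-adicness (or non-$p$-adicness) of $b_F^\top z$, so I may assume $z\geq \0$. Now set $w := A_F^\top z$, which is integral. For any $x\in P$, the inequalities $A_F x \leq b_F$ together with $z\geq\0$ yield
\[
w^\top x \;=\; z^\top (A_F x) \;\leq\; z^\top b_F \;=\; b_F^\top z,
\]
with equality for every $x\in F$ since $A_Fx=b_F$ on $F$. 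Because $F$ is nonempty, the max is attained and its value is exactly $b_F^\top z$; by (5) this value is $p$-adic, proving (3).

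Once the cycle is closed, all five conditions are mutually equivalent. The only subtlety to watch is the nonnegativity reduction via \Cref{free-non-neg} in the step $(5)\Rightarrow(3)$, since without it the inequality $z^\top A_F x \leq z^\top b_F$ has no reason to hold; everything else is essentially an unfolding of the Density Lemma, the Theorem of the Alternative, and standard LP duality.
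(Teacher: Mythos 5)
Your proof is correct and follows essentially the same route as the paper: the Density Lemma for $(1)\Leftrightarrow(2)$, the Theorem of the Alternative for $(2)\Leftrightarrow(3)$, the optimal-face argument for $(1)\Rightarrow(4)\Rightarrow(5)$, and for $(5)\Rightarrow(3)$ the same reduction to $z\geq\0$ via \Cref{free-non-neg} followed by taking $w:=A_F^\top z$ as the objective. The only cosmetic difference is that you verify the optimum value equals $b_F^\top z$ by the direct inequality $z^\top A_Fx\leq z^\top b_F$, whereas the paper packages the same computation as a dual feasible solution plus complementary slackness.
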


There is an intriguing contrast between this characterization and that of integral polyhedra~(see \cite{ConfortiCornuejolsZambelli2014}, Theorem 4.1), namely the novelty of statements (2) and (3), which are ultimately due to Strict Complementarity and the Density Lemma. 

\begin{proof}
{\bf (1) $\Rightarrow$ (2)} follows immediately from definition.
{\bf (2) $\Rightarrow$ (1)} By the Density Lemma, every nonempty face contains a $p$-adic point, so $P$ is a $p$-adic polyhedron.
{\bf (2) $\Leftrightarrow$ (3)} follows from the Theorem of the Alternative.
{\bf (1) $\Rightarrow$ (4)} Suppose $\max\{w^\top x:x\in P\}$ has an optimum. Let $F$ be the set of optimal solutions.
As $F$ is in fact a face of $P$, and $P$ is $p$-adic, it follows that $F$ contains a $p$-adic point.
{\bf (4) $\Rightarrow$ (5)} If $x$ is a $p$-adic vector, and $w$ an integral vector, then $w^\top x$ is a $p$-adic rational.

{\bf (5) $\Rightarrow$ (3)} We prove the contrapositive. Suppose (3) does not hold, that is, there exist a nonempty face $F$ and $z$ such that $w:=A_F^\top z\in \mathbb{Z}^n$ and $b_F^\top z$ is not $p$-adic. By \Cref{free-non-neg}, we may assume that $z\geq\0$. 
Consider the following pair of dual linear programs:
\begin{align}
&\max\{w^\top x: Ax\leq b\} \tag{P}\label{P-dyadic} \\
&\min\{b^\top y: A^\top y=w, y\geq\0\} \tag{D}\label{D-dyadic}
\end{align}
Denote by $A_{\bar{F}}$ the row submatrix of $A$ corresponding to rows not in $A_F$.
Denote by $y_F, y_{\bar{F}}$ the variables of \eqref{D-dyadic} corresponding to rows $A_F$ and $A_{\bar{F}}$ of $A$, respectively.
Define $\bar{y}\geq \0$ where $\bar{y}_F=z$ and $\bar{y}_{\bar{F}}=\0$.
Then $A^\top \bar{y}=A_F^\top z =w$, so $\bar{y}$ is feasible for \eqref{D-dyadic}.
Moreover, Complementary Slackness holds for every pair $(x,\bar{y}),x\in F$. Subsequently, $\bar{y}$ is an optimal solution to \eqref{D-dyadic}, and $b^\top \bar{y}=b_F^\top z$ is the common optimal value of the two linear programs. Since $w$ is integral and $b_F^\top z$ is not $p$-adic, (5) does not hold, as required.
\end{proof}

\begin{CO}\label{TDp->p-adic}
Let $A\in \mathbb{Z}^{m\times n},b\in \mathbb{Z}^m$, and $p$ a prime. If $Ax\leq b$ is totally dual $p$-adic, then $\{x:Ax\leq b\}$ is a $p$-adic polyhedron.
\end{CO}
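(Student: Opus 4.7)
The plan is to derive this corollary directly from \Cref{p-adic-polyhedra} by verifying its condition (5): for every $w \in \mathbb{Z}^n$ such that $\max\{w^\top x : x \in P\}$ has an optimum, that optimum is $p$-adic. This condition pairs with the definition of total dual $p$-adicness almost verbatim, with LP duality as the bridge.

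Concretely, I would fix an integral $w$ for which the primal $\max\{w^\top x : Ax \leq b\}$ attains its optimum. By strong LP duality, the dual $\min\{b^\top y : A^\top y = w,\ y \geq \0\}$ is then feasible with the same finite optimum value. Since $Ax \leq b$ is assumed totally dual $p$-adic, this dual has a $p$-adic optimal solution $y^\star$. The common optimum value equals $b^\top y^\star$, which, as an integer linear combination of $p$-adic rationals (since $b \in \mathbb{Z}^m$), is itself $p$-adic. Thus condition (5) of \Cref{p-adic-polyhedra} holds, and that theorem yields that $P$ is a $p$-adic polyhedron.

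There is essentially no obstacle here; the content is all absorbed by \Cref{p-adic-polyhedra}, whose proof has already done the work of showing why a $p$-adic dual optimum suffices for the face structure of $P$ to contain $p$-adic points. The only mild subtlety is the tacit assumption that $P$ is nonempty (since the definition of a $p$-adic polyhedron requires this); but if $P = \emptyset$, the statement is understood vacuously, or else one restricts attention to the case $P \neq \emptyset$, in which both the primal optimum exists for at least the trivial direction $w = \0$ and the argument above goes through.
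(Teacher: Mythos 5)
Your proof is correct and is exactly the argument the paper intends: the paper's proof of this corollary is the one-line invocation of \Cref{p-adic-polyhedra}, (5) $\Rightarrow$ (1), and your verification of condition (5) via strong duality and the integrality of $b$ is precisely the implicit content of that step.
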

\begin{proof}
This follows immediately from \Cref{p-adic-polyhedra}~(5) $\Rightarrow$ (1).
\end{proof}

\begin{proof}[Proof of \Cref{pq->integral}]
By \Cref{TDp->p-adic}, $P:=\{x:Ax\leq b\}$ is a $p$- and $q$-adic polyhedron, that is, every minimal nonempty face of $P$ contains a $p$-adic point and a $q$-adic point. Each minimal nonempty face of $P$ is an affine subspace, so by \Cref{CO:pq->integral}, it contains an integral point. Thus, every minimal nonempty face of $P$ contains an integral point, so $P$ is an integral polyhedron.
\end{proof}

\section{$T$-joins, circuits, and perfect matchings}\label{sec:examples}

Let $G=(V,E)$ be a graph, and $T$ a nonempty subset of even cardinality. A \emph{$T$-cut} is a cut of the form $\delta(U)$ where $|U\cap T|$ is odd. Recall that a $T$-join is a subset $J\subseteq E$ such that the set of odd-degree vertices of $G[J]$ is precisely $T$. It can be readily checked that every $T$-cut and $T$-join intersect (see \cite{Cornuejols01}, Chapter~2). The following result was recently proved:

\begin{theorem}[\cite{Abdi-Tjoins}]\label{Tjoins-dyadic}
Let $G=(V,E)$ be a graph, and $T$ a nonempty subset of even cardinality. Let $\tau$ be the minimum cardinality of a $T$-cut. Then there exists a dyadic assignment $y_J\geq \0$ to every $T$-join~$J$ such that $\1^\top y=\tau$ and $\sum\left(y_J:J\text{ a $T$-join containing $e$}\right)\leq 1~\forall e\in E$.
\end{theorem}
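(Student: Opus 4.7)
The plan begins by framing the claim via LP duality. Consider
\[
\max\left\{\mathbf{1}^\top y : \sum_{J\ni e} y_J \leq 1 \ \forall e\in E,\ y\geq \mathbf{0}\right\}
\]
with $y$ indexed by $T$-joins, whose dual is
\[
\min\left\{\mathbf{1}^\top x : x(J)\geq 1\ \forall T\text{-joins } J,\ x\geq \mathbf{0}\right\}.
\]
An integer feasible point of the dual is the characteristic vector of an edge set $S$ meeting every $T$-join; such an $S$ must contain a $T$-cut, because $E\setminus S$ supports a $T$-join if and only if every component of $(V,E\setminus S)$ has even intersection with $T$. Hence the integer minimum equals $\tau$. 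By Edmonds-Johnson, the $T$-join polyhedron $\{x\geq \mathbf{0} : x(\delta(U)) \geq 1 \text{ for all } T\text{-cuts } U\}$ is integral, so its blocker---which is precisely the dual feasible region above---is integral as well, giving LP optimum equal to $\tau$.

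\textbf{Dyadic optimum by induction.} The main work is then to produce a dyadic $y$ attaining $\tau$. I would induct on $|V(G)|+|E(G)|$. The base case $\tau=1$ is trivial: take any $T$-join $J$ and set $y_J=1$. For the inductive step, fix a minimum $T$-cut $\delta(U_0)$. Since every $T$-join meets $\delta(U_0)$ in an odd number of edges, any optimum $y$ must use only $T$-joins crossing $\delta(U_0)$ exactly once and must saturate every edge of $\delta(U_0)$. If some non-$T$ vertex $v$ has degree at least two, apply Lov\'asz's splitting-off theorem to replace two edges $uv,vw$ at $v$ by a single edge $uw$ while preserving $\tau$, obtaining a smaller graph $G'$. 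Apply induction to $G'$ to get a dyadic packing, then lift it to $G$ by replacing the new edge in each $T$-join by the pair $uv,vw$; the lifted packing uses the same weights and is therefore dyadic.

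\textbf{Main obstacle.} The hard part is handling ``stuck'' configurations where no valid splitting-off preserves $\tau$---for instance, graphs in which every non-$T$ vertex already has degree at most one, or where every admissible Lov\'asz split creates a smaller $T$-cut. In such rigid cases one must use a different reduction: contract a distinct tight $T$-cut of size $\tau$, induct on the two resulting pieces, and glue the dyadic packings along the common cut in a way that is consistent on the boundary edges. Ensuring that the glued packing remains dyadic sometimes requires combining two independent inductive solutions with weight $1/2$, which is what forces the denominators to be powers of two rather than arbitrary rationals. The central difficulty---resolved in \cite{Abdi-Tjoins} through a careful structural analysis of the lattice of tight $T$-cuts---is to guarantee that this recursion terminates without a parity obstruction, and that the matching of $T$-joins across a contracted cut can always be realized by fractional combinations whose denominators remain bounded by a power of two.
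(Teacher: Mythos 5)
Your LP setup is sound: the blocking relation between $T$-joins and $T$-cuts, together with the Edmonds--Johnson theorem and Lehman's theorem on blocking polyhedra, does give that the fractional packing LP has optimal value $\tau$. But that is the easy half; the entire content of the theorem is dyadicness, and there your argument has a genuine gap. First, the splitting-off step is not justified as stated: Lov\'asz's (and Mader's) splitting-off theorems preserve \emph{local edge-connectivities} $\lambda(s,t)$, whereas the minimum $T$-cut is a minimum over sets $U$ with $|U\cap T|$ odd and can strictly decrease under a split that preserves all pairwise connectivities; you would need a splitting-off lemma tailored to $T$-cuts, which you neither state nor prove. Second, and more fundamentally, you explicitly concede the crux --- the ``stuck'' configurations, the termination of the recursion, and the consistency of gluing dyadic packings across tight $T$-cuts --- and defer their resolution to the cited reference. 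As written, nothing in your argument forces the denominators to be powers of $2$ rather than arbitrary integers; indeed, the discussion in \S\ref{sec:examples} (a basic optimal dual solution for a Blanu\v{s}a snark with entries $\frac{1}{3}$ and $\frac{2}{3}$) shows that naive LP or greedy combinatorial constructions will in general produce non-dyadic packings, so the dyadic conclusion cannot be waved through.

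For comparison: this paper does not prove the theorem either --- it is imported from \cite{Abdi-Tjoins} --- but it does indicate the actual route, which is quite different from your induction. The proof there reduces minimum $T$-joins to perfect matchings of an auxiliary graph, invokes Lov\'asz's characterization of the \emph{matching lattice} to show that the relevant vector is a dyadic \emph{linear} (possibly negative-coefficient) combination of the incidence vectors, and then uses the Density Lemma (\Cref{density}) and the Theorem of the Alternative (\Cref{alternative}), together with a strictly positive point in the relative interior of the optimal dual face, to convert this into a nonnegative dyadic combination. No splitting-off or recursive gluing is involved, and it is precisely the lattice step that produces powers of $2$ in the denominators.
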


The proof of \Cref{Tjoins-dyadic} uses the Density Lemma, the Theorem of the Alternative, and a result of Lov\'{a}sz on the \emph{matching lattice}~\cite{Lovasz87}.

%\begin{theorem}\label{Tjoins-TDD}
%Let $G=(V,E)$ be a graph, and $T\subseteq V$ a nonempty subset of even cardinality. Then the linear system $x(J)\geq 1~\forall \text{ $T$-joins $J$}; x\geq \0$ is TDD.
%\end{theorem}
\begin{proof}[Proof of \Cref{Tjoins-TDD}]
Let $A$ be the matrix whose columns are labeled by $E$, and whose rows are the incidence vectors of the $T$-joins. We need to show that $\min\{w^\top x: Ax\geq \1,x\geq \0\}$ yields a TDD system. Choose an integral $w$ such that the dual $\max\{\1^\top y: A^\top y\leq w, y\geq\0\}$ has an optimal solution, that is, $w\geq \0$. Let $G'$ be obtained from $G$ after replacing every edge $e$ with $w_e$ parallel edges (if $w_e=0$, then $e$ is deleted). Let $\tau_w$ be the minimum cardinality of a $T$-cut of $G'$, which is also the minimum weight of a $T$-cut of $G$. By \Cref{Tjoins-dyadic}, there exists a dyadic assignment $\bar{y}_J\geq 0$ to every $T$-join of $G'$ such that $\1^\top \bar{y}=\tau_w$ and $\sum\left(\bar{y}_J:J\text{ a $T$-join of $G'$ containing $e$}\right)\leq 1~\forall e\in E(G')$. This naturally gives a dyadic assignment $y^\star_J\geq 0$ to every $T$-join of $G$ such that $\1^\top y^\star=\tau_w$ and $A^\top y^\star\leq w$. Now let $\delta(U)$ be a minimum weight $T$-cut of $G$. Then $\chi_{\delta(U)}$ is a feasible solution to the primal which has value $\tau_w$. As a result, $\chi_{\delta(U)}$ is optimal for the primal, and $y^\star$ is optimal for the dual. Thus, the dual has a dyadic optimal solution, as required.
\end{proof}

Given the proof of \Cref{p-adic-matrix}, a natural attempt to prove \Cref{Tjoins-TDD} is to prove that every basic optimal solution of $\max\{\1^\top y: A^\top y\leq w, y\geq\0\}$ is dyadic. However, this is not necessarily true if $G=(V,E)$ is a \emph{snark}, $T=V$, and $w=\1$. In fact, as was shown by Palion~\cite{Palion-thesis}, for the majority of snarks on up to $40$ vertices, an off-the-shelf LP solver finds a basic optimal solution that is not dyadic. For example, if $G$ is the first \emph{Blanu\v{s}a} snark with HoG graph ID 2760~\cite{HoG}, which has $18$ vertices, the linear program has a basic optimal solution that assigns $\frac13$ to some seven perfect matchings, and $\frac23$ to some other perfect matching. 

\Cref{Tjoins-TDD} does not extend to the $p$-adic setting for any prime $p\geq 3$. To see this, let $G$ be the graph with vertices $1,2,3,4,5$ and edges $\{1,3\},\{1,4\},\{1,5\},\{3,2\},\{4,2\},\{5,2\}$, let $T:=\{1,2,3,4\}$, and let $w:=\1$. Then the linear program $\max\{\1^\top y: A^\top y\leq w, y\geq\0\}$ has a unique optimal solution, namely $y^\star=\frac12\cdot \1$, which is not $p$-adic for any $p\geq 3$.

Moving on, let $G=(V,E)$ be a graph such that $|V|$ is even. Let us prove that $\mathbf{M}(G)$, which is equal to the set $\{\chi_M:M \text{ a perfect matching of $G$}\}$, is a DGSC.

%\begin{theorem}\label{pm-DGSC}
%Let $G=(V,E)$ be a graph such that $|V|$ is even. Then $\mathbf{M}(G)$ is a DGSC.
%\end{theorem}
\begin{proof}[Proof of \Cref{pm-DGSC}]
We may assume that $G$ contains a perfect matching. Let $T:=V$. Note that every $T$-join has cardinality at least $\frac{|V|}{2}$, with equality holding precisely for the perfect matchings. By \Cref{Tjoins-TDD}, the linear system $x(J)\geq 1~\forall \text{ $T$-joins $J$}; x\geq \0$ is TDD. Let $P$ be the corresponding polyhedron, and $F$ the minimal face containing the point $\frac{2}{|V|}\cdot \1$. The tight constraints of $F$ are precisely $x(M)\geq 1$ for perfect matchings $M$, so by \Cref{TDp-char} for $p=2$, the rows of the corresponding coefficient matrix form a DGSC, implying in turn that $\mathbf{M}(G)$ is a DGSC.
\end{proof}

Let $P_{10}$ be the Petersen graph. Then $P_{10}$ has six perfect matchings. Let $M$ be the matrix whose columns are labeled by $E(P_{10})$, and whose rows are the incidence vectors of the perfect matchings. It can be checked that the elementary divisors of $M$ are $(1,1,1,1,1,2)$. Thus, for any prime $p\geq 3$, the rows of $M$ which are the vectors in $\mathbf{M}(P_{10})$ do not form a $p$-GSS by \Cref{pGSS-char}, and so they do not form a $p$-GSC by \Cref{pGSC->pGSS}.\footnote{If one followed the other definition of Hilbert bases as stated in \cite{Schrijver03}, then $\mathbf{M}(P_{10})$ would form a Hilbert basis, as noted in \cite{Goddyn93}.} Thus, \Cref{pm-DGSC} does not extend to the $p$-adic setting for $p\geq 3$.

Let $G=(V,E)$ be a graph. Recall that a cycle is a subset $C\subseteq E$ such that every vertex in $V$ is incident with an even number of edges in $C$. In particular, $\emptyset$ is a cycle. Define $\mathbf{C'}(G):=\{\chi_C:C \text{ a cycle of $G$}\}$. We shall prove that this set is a DGSC. First, we need to prove the following:

\begin{LE}\label{cycles-TDD}
Let $G=(V,E)$ be a graph. Consider edge variables $x,z\in \mathbb{R}^E$. Then the linear system $x(C)+z(E\setminus C)\geq 1 ~\forall \text{ cycles $C$}; x\geq \0; z\geq \0$ is TDD.
\end{LE}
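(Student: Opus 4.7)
The plan is to reduce the lemma to \Cref{Tjoins-TDD} by passing to an auxiliary graph $G'$ whose $T$-joins biject with the cycles of $G$ and in which the $T$-join dual LP, after choosing the weights correctly, coincides verbatim with the cycle dual LP for $G$.

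First I would construct $G'=(V',E')$ by subdividing each edge $e=\{p,q\}$ of $G$ twice: introduce new vertices $a_e,b_e$ and replace $e$ by the three edges $\{p,a_e\},\{a_e,b_e\},\{b_e,q\}$. Then set $T':=\{a_e,b_e:e\in E\}$, which has even cardinality $2|E|$ (the case $E=\emptyset$ being trivial). Since each of $a_e,b_e$ has degree $2$ in $G'$, a short case analysis forces any $T'$-join $J$ to take one of exactly two admissible configurations inside the gadget of each edge $e$: either (I) $\{p,a_e\},\{b_e,q\}\in J$ while $\{a_e,b_e\}\notin J$ (contributing $1$ to each of $\deg_J(p)$ and $\deg_J(q)$), or (II) $\{a_e,b_e\}\in J$ and the other two are not (contributing $0$ at original vertices). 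Letting $C_J\subseteq E$ denote the set of edges that use configuration (I), the parity condition at every $v\in V$ becomes precisely the condition that $C_J$ be a cycle of $G$. This yields a bijection between $T'$-joins of $G'$ and cycles of $G$, with $C=\emptyset$ corresponding to the $T'$-join that uses (II) on every edge.

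Given integral $w^x,w^z\geq \0$ for which the cycle dual has an optimum, I would define weights $w'$ on $E'$ by $w'_{\{p,a_e\}}:=w'_{\{b_e,q\}}:=w^x_e$ and $w'_{\{a_e,b_e\}}:=w^z_e$, and apply \Cref{Tjoins-TDD} to $(G',T')$ with these weights. Under the bijection, the $T'$-join dual constraints coming from $\{p,a_e\}$ and from $\{b_e,q\}$ both read $\sum_{C\ni e}y_C\leq w^x_e$, while the one from $\{a_e,b_e\}$ reads $\sum_{C\not\ni e}y_C\leq w^z_e$; hence the $T'$-join dual LP for $G'$ is literally the cycle dual LP for $G$ with weights $(w^x,w^z)$. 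The dyadic optimum delivered by \Cref{Tjoins-TDD} is therefore the desired dyadic optimum for the cycles system. The main obstacle is designing the gadget and the set $T'$ so that admissible $T'$-joins sweep out exactly the cycles of $G$; once the bijection is pinned down, the matching of the two dual LPs and the conclusion are immediate.
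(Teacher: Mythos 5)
Your proposal is correct and follows essentially the same route as the paper: reduce to \Cref{Tjoins-TDD} by subdividing the edges of $G$ so that the $T'$-joins of the auxiliary graph correspond exactly to the cycles of $G$, making the two dual LPs coincide. The only difference is cosmetic: the paper subdivides each edge once and takes $T'$ to be the new vertices together with a parity-correcting choice on the original vertices, whereas your double subdivision with $T'$ consisting of the new vertices alone avoids that bookkeeping at the cost of one harmless duplicated dual constraint per edge.
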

\begin{proof}
Let $G'$ be the graph obtained from $G$ after subdividing every edge once, and let $T'$ be any subset of $V(G')$ that has even cardinality and contains all the new vertices (of degree two). Denote by $\{e,\hat{e}:e\in E\}$ the edge set of $G'$. It can be readily checked that the family of $T'$-joins of $G'$ is equal to $\{C\cup \hat{C}:C \text{ a cycle of $G$}\}$, after swapping $e,\hat{e}$ for some edges $e\in E$, if necessary. Subsequently, $y(J)\geq 1~\forall \text{ $T'$-joins $J$ of $G'$}; y\geq \0$ is just a relabeling of $x(C)+z(E\setminus C)\geq 1 ~\forall \text{ cycles $C$ of $G$}; x\geq \0; z\geq \0$. Since the first system is TDD by \Cref{Tjoins-TDD}, the second system is also TDD.
\end{proof}

\Cref{cycles-TDD} above takes advantage of the fact that the \emph{cuboid} of the cycle space of a graph is isomorphic to the clutter of $T$-joins of another graph~\cite{Abdi-cuboids}.

\begin{LE}\label{cycles-DGSC}
Let $G=(V,E)$ be a graph. Then $\mathbf{C'}(G)$ is a DGSC.
\end{LE}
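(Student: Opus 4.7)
The plan is to invoke the TDD property from \Cref{cycles-TDD} through \Cref{TDp-char}, pick an appropriate face, and then project back to the $E$-coordinate space. Consider the polyhedron
\[
P := \{(x,z) \in \mathbb{R}^E \times \mathbb{R}^E : x(C) + z(E \setminus C) \geq 1 \ \forall \text{ cycles } C,\ x \geq \0,\ z \geq \0\}.
\]
The point $(x^\star, z^\star) := (\tfrac{1}{|E|}\1, \tfrac{1}{|E|}\1)$ satisfies $x^\star(C) + z^\star(E\setminus C) = \tfrac{|C|}{|E|} + \tfrac{|E\setminus C|}{|E|} = 1$ for every cycle $C$, and is strictly positive in every coordinate. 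Hence the minimal face $F$ of $P$ through $(x^\star, z^\star)$ has as implicit equalities precisely the cycle inequalities, and the corresponding rows of $A_F$ are exactly the vectors $(\chi_C, \chi_{E \setminus C})$ indexed by cycles $C$. Since $P$'s defining system is TDD by \Cref{cycles-TDD}, \Cref{TDp-char}(2) applied with $p = 2$ gives that $\{(\chi_C, \chi_{E \setminus C}) : C \text{ a cycle of } G\}$ forms a DGSC in $\mathbb{Z}^{2|E|}$.

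To descend from this lifted DGSC to the cycle vectors living in $\mathbb{Z}^E$, let $b \in \mathbb{Z}^E$ be an integral vector in $\cone(\mathbf{C'}(G))$, say $b = \sum_C \lambda_C \chi_C$ with $\lambda_C \geq 0$, and put $K' := \sum_C \lambda_C$. Pick any integer $K \geq K'$. Since $\emptyset$ is a cycle with $(\chi_\emptyset, \chi_{E\setminus\emptyset}) = (\0, \1)$, I can write
\[
(b,\, K\1 - b) \;=\; \sum_C \lambda_C (\chi_C, \chi_{E \setminus C}) \;+\; (K - K')(\0, \1),
\]
which exhibits the integral vector $(b, K\1 - b)$ as an element of $\cone\{(\chi_C, \chi_{E\setminus C}) : C \text{ a cycle}\}$. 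Applying the DGSC property established above then produces dyadic coefficients $\mu_C \geq 0$ with $(b, K\1 - b) = \sum_C \mu_C (\chi_C, \chi_{E\setminus C})$; projecting onto the first $|E|$ coordinates yields the dyadic conic expression $b = \sum_C \mu_C \chi_C$, as desired.

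The main obstacle is the passage between the lifted DGSC living in the cuboid representation and the target DGSC for cycle vectors in $\mathbb{Z}^E$. The trouble is that for a generic $b = \sum_C \lambda_C \chi_C$, the quantity $K' = \sum_C \lambda_C$ need not be integral, so the naive lift $(b, K'\1 - b)$ is not in $\mathbb{Z}^{2|E|}$ and the DGSC property cannot be directly applied to it. The remedy is to exploit the empty cycle, which contributes the direction $(\0,\1)$ in the lift while contributing nothing to the first coordinate; this degree of freedom lets me round $K'$ up to any integer $K$, making $(b, K\1-b)$ integral without altering the target combination in $\mathbb{Z}^E$.
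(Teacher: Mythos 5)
Your proposal is correct and follows essentially the same route as the paper: invoke \Cref{cycles-TDD}, take the minimal face of the polyhedron containing $\frac{1}{|E|}\cdot\1$ to conclude via \Cref{TDp-char} that the lifted vectors $(\chi_C,\chi_{E\setminus C})$ form a DGSC, and then use the empty cycle (the zero row, contributing $(\0,\1)$) to round the total weight up to an integer so that the lifted vector is integral before projecting back to the $x$-coordinates. Your write-up just makes explicit the step the paper phrases as ``since $A$ has a zero row, we may assume $\1^\top\bar y$ is an integer.''
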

\begin{proof}
Let $M$ be the coefficient matrix of the linear system $x(C)+z(E\setminus C)\geq 1 ~\forall \text{ cycles $C$}$ from \Cref{cycles-TDD}. Note that $M$ is a $0,1$ matrix with exactly $|E|$ $1$s per row. Let $A$ be the column submatrix corresponding to the $x$ variables; observe that $E-A$ is the column submatrix corresponding to the $z$ variables, where $E$ denotes the all-ones matrix. By \Cref{cycles-TDD}, $Ax+(E-A)z\geq \1;x\geq \0;z\geq \0$ is TDD. Let $P$ be the corresponding polyhedron, and $F$ the minimal face containing the point $\frac{1}{|E|}\cdot \1$. The tight constraints of $F$ are precisely $Ax+(E-A)z\geq \1$, so by \Cref{TDp-char} for $p=2$, the rows of $(A\mid E-A)$ form a DGSC. 

To finish the proof, we need to show that the rows of $A$, which are precisely the elements of $\mathbf{C'}(G)$, form a DGSC. To this end, let $w$ be an integral vector in the conic hull of the rows of $A$. Then there exists a $\bar{y}\geq 0$ such that $\bar{y}^\top A  = w^\top$. Observe that $\bar{y}^\top (A\mid E-A) = (w^\top \mid (\1^\top \bar{y})\cdot \1^\top -w^\top)\geq \0$. Since $A$ has a zero row, we may assume that $\1^\top \bar{y}$ is an integer. Subsequently, $\bar{y}^\top (A\mid E-A)$ is an integral vector, so because the rows of $(A\mid E-A)$ form a DGSC, there exists a dyadic $y'\geq \0$ such that $\bar{y}^\top (A\mid E-A)=y'^\top (A\mid E-A)$. In particular, $y'^\top A=w^\top$, so $w$ is a dyadic conic combination of the rows of $A$, as needed.
\end{proof}

Recall that a circuit is a nonempty cycle that does not contain another nonempty cycle. Recall that $\mathbf{C}(G)=\{\chi_C:C \text{ a circuit of $G$}\}$.

\begin{proof}[Proof of \Cref{circuits-DGSC}]
Observe that every nonempty cycle is the disjoint union of some circuits. Thus, since $\mathbf{C'}(G)$ is a DGSC by \Cref{cycles-DGSC}, we obtain immediately that $\mathbf{C}(G)$ is a DGSC.
\end{proof}

Let $H$ be the graph on two vertices with three parallel edges. Then $\1\in \cone(\mathbf{C}(H))$. In fact, $\1$ can be written as a unique conic combination of the vectors in $\mathbf{C}(H)$, in which every vector is assigned a coefficient of $\frac12$. Consequently, for any prime $p\geq 3$, the vectors in $\mathbf{C}(H)$ do not form a $p$-GSC, so \Cref{circuits-DGSC} does not extend to the $p$-adic setting.

We tried two other (more natural) approaches for proving \Cref{circuits-DGSC}, each of which interestingly failed. Let $G=(V,E)$ be a graph, and $w\in \mathbb{Z}^E_+$. It is known that $w\in \cone(\mathbf{C}(G))$ if, and only if, $w(\delta(U)\setminus f)\geq w_f$ for all cuts $\delta(U)$ and all $f\in \delta(U)$~\cite{Seymour79-b}. Suppose $w\in \cone(\mathbf{C}(G))$. If $w=\1$, then $G$ is bridgeless, so $2\cdot \1\in \mathbb{R}^E$ belongs to the lattice generated by $\mathbf{C}(G)$~\cite{Goddyn93}, so by using the Density Lemma one can show that $w$ can be written as a dyadic conic combination of the vectors in $\cone(\mathbf{C}(G))$, as desired. Otherwise, $w\neq \1$. One attempt would be to reduce to the all-ones case by replacing every edge $e$ by $w_e$ parallel edges, but this approach fails for the simple reason that the new graph has circuits (of length two) that do not correspond to circuits of $G$. Another attempt would be to reconstruct the proof of the $w=\1$ case for general $w$. This approach goes through if $w$ belongs to the relative interior of $\cone(\mathbf{C}(G))$, but if $w(\delta(v)\setminus f)-w_f=0$ for some $v\in V$ and $f\in \delta(v)$, then the approach fails.

A very interesting, relevant result is that a graph has the so-called \emph{circuit cover property} if, and only if, it has no Petersen minor~\cite{Alspach94}. Subsequently, if $G$ has no Petersen minor, then every vector in $\cone(\mathbf{C}(G))\cap \left\{x\in \mathbb{Z}^E: x(\delta(U)) \text{ is even } \forall U\subseteq V\right\}$ can be expressed as an integral linear combination of the vectors in $\mathbf{C}(G)$, so every integral vector in $\cone(\mathbf{C}(G))$ can be expressed as a half-integral linear combination of the vectors in $\mathbf{C}(G)$.

\section*{Acknowledgement}

We would like to thank the referees whose comments on an earlier draft improved the final presentation. Bertrand Guenin was supported by NSERC grant 238811. Levent Tun\c{c}el was supported by Discovery Grants from NSERC and ONR grant N00014-18-1-2078.

{\small \bibliographystyle{abbrv}\bibliography{references}}

\begin{thebibliography}{10}

\bibitem{Abdi-dyadic}
A.~Abdi, G.~Cornu{\'e}jols, B.~Guenin, and L.~Tun{\c c}el.
\newblock Clean clutters and dyadic fractional packings.
\newblock {\em SIAM J. Discret. Math.}
\newblock Accepted.

\bibitem{Abdi-cuboids}
A.~Abdi, G.~Cornu\'{e}jols, N.~Guri{\v c}anov{\'a}, and D.~Lee.
\newblock Cuboids, a class of clutters.
\newblock {\em Journal of Combinatorial Theory, Series B}, 142:144--209, 2020.

\bibitem{Abdi-Tjoins}
A.~Abdi, G.~Cornu\'{e}jols, and Z.~Palion.
\newblock On dyadic fractional packings of {T}-joins.
\newblock Submitted, 2021.

\bibitem{Alspach94}
B.~Alspach, L.~Goddyn, and C.~Zhang.
\newblock Graphs with the circuit cover property.
\newblock {\em Transactions of the American Mathematical Society},
  344(1):131--154, 1994.

\bibitem{Appa04}
G.~Appa and B.~Kotnyek.
\newblock Rational and integral k-regular matrices.
\newblock {\em Discrete Math.}, 275:1--15, 2004.

\bibitem{HoG}
G.~Brinkmann, K.~Coolsaet, J.~Goedgebeur, and H.~M{\'e}lot.
\newblock House of graphs: a database of interesting graphs.
\newblock {\em Discrete Applied Maathematics}, 161(1-2):311--314, 2013.

\bibitem{ConfortiCornuejolsZambelli2014}
M.~Conforti, G.~Cornu{\'e}jols, and G.~Zambelli.
\newblock {\em Integer Programming}.
\newblock Springer International Publishing, 2014.

\bibitem{Cook98}
W.~Cook, W.~Cunningham, W.~R. Pulleyblank, and A.~Schrijver.
\newblock {\em Combinatorial Optimization}.
\newblock John Wiley and Sons, Inc., 1998.

\bibitem{Cook11}
W.~Cook, T.~Koch, D.~Steffy, and K.~Wolter.
\newblock An exact rational mixed-integer programming solver.
\newblock In O.~G{\"u}nl{\"u}k and G.~Woeginger, editors, {\em Integer
  Programming and Combinatorial Optimization (IPCO 2011), Lecture Notes in
  Computer Science}, volume 6655, pages 104--116. Springer, Berlin, Heidelberg,
  2011.

\bibitem{Cornuejols01}
G.~Cornu{\'e}jols.
\newblock {\em Combinatorial Optimization: Packing and Covering}, volume~74.
\newblock SIAM, 2001.

\bibitem{Ding08}
G.~Ding, L.~Feng, and W.~Zang.
\newblock The complexity of recognizing linear systems with certain integrality
  properties.
\newblock {\em Math. Program.}, 114:321--334, 2008.

\bibitem{Edmonds77}
J.~Edmonds and R.~Giles.
\newblock A min-max relation for submodular functions on graphs.
\newblock In {\em Studies in integer programming ({P}roc. {W}orkshop, {B}onn,
  1975)}, pages 185--204. Ann. of Discrete Math., Vol. 1, 1977.

\bibitem{Fulkerson71}
D.~Fulkerson.
\newblock Blocking and anti-blocking pairs of polyhedra.
\newblock {\em Math. Program.}, 1:168--194, 1971.

\bibitem{Gerards87}
B.~Gerards and A.~Seb\H{o}.
\newblock Total dual integrality implies local strong unimodularity.
\newblock {\em Math. Program.}, 38:69--73, 1987.

\bibitem{Giles79}
F.~Giles and W.~Pulleyblank.
\newblock Total dual integrality and integer polyhedra.
\newblock {\em Linear Algebra and its Applications}, 25:191 -- 196, 1979.

\bibitem{Goddyn93}
L.~Goddyn.
\newblock Cones, lattices, and hilbert bases of circuits and perfect matchings.
\newblock In {\em Contemporary Mathematics}, volume 147, pages 419--439. Amer.
  Math. Soc., 1993.

\bibitem{Kannan79}
R.~Kannan and A.~Bachem.
\newblock Polynomial algorithms for computing the {S}mith and {H}ermite normal
  forms of an integer matrix.
\newblock {\em SIAM J. Comput.}, 8:499--507, 1979.

\bibitem{Lee89}
J.~Lee.
\newblock Subspaces with well-scaled frames.
\newblock {\em Linear Algebra Appl.}, 114/115:21--56, 1989.

\bibitem{Lovasz87}
L.~Lov{\'a}sz.
\newblock Matching structure and the matching lattice.
\newblock {\em Journal of Combinatorial Theory, Series B}, 43(2):187 -- 222,
  1987.

\bibitem{Palion-thesis}
Z.~Palion.
\newblock On dyadic fractional packings of {T}-joins.
\newblock Undergraduate thesis, London School of Economics and Political
  Science, April 2021.

\bibitem{Pap11}
J.~Pap.
\newblock Recognizing conic {TDI} systems is hard.
\newblock {\em Math. Program., Ser. A}, 128:43--48, 2011.

\bibitem{Schrijver98}
A.~Schrijver.
\newblock {\em Theory of Linear and Integer Programming}.
\newblock Wiley, April 1998.

\bibitem{Schrijver03}
A.~Schrijver.
\newblock {\em Combinatorial Optimization. Polyhedra and Efficiency.}
\newblock Springer, Berlin, Heidelberg, 2003.

\bibitem{Sebo90}
A.~Seb\H{o}.
\newblock Hilbert bases, {C}arath{\'e}odory's theorem, and combinatorial
  optimization.
\newblock In R.~Kannan and W.~Pulleyblank, editors, {\em Integer Programming
  and Combinatorial Optimization (IPCO 1990), Lecture Notes in Computer
  Science}, pages 431--456, 1990.

\bibitem{Seymour79}
P.~D. Seymour.
\newblock On multi-colourings of cubic graphs, and conjectures of {F}ulkerson
  and {T}utte.
\newblock {\em Proceedings of the London Mathematical Society}, 38(3):423--460,
  05 1979.

\bibitem{Seymour79-b}
P.~D. Seymour.
\newblock Sums of circuits.
\newblock In J.~Bondy and U.~Murty, editors, {\em Graph Theory and Related
  Topics}, pages 342--355. Academic Press, New York, 1979.

\bibitem{Seymour81}
P.~D. Seymour.
\newblock Matroids and multicommodity flows.
\newblock {\em Europ. J. Combinatorics}, 2:257--290, 1981.

\bibitem{Steffy11}
D.~E. Steffy.
\newblock {\em Topics in Exact Precision Mathematical Programming}.
\newblock PhD thesis, Georgia Institute of Technology, 2011.

\bibitem{Szekeres73}
G.~Szekeres.
\newblock Polyhedral decomposition of cubic graphs.
\newblock {\em Bull. Austral. Math. Soc.}, 8:367--387, 1973.

\bibitem{Wei06}
H.~Wei.
\newblock {\em Numerical Stability in Linear Programming and Semidefinite
  Programming}.
\newblock PhD thesis, University of Waterloo, 2006.

\bibitem{Whittle95}
G.~Whittle.
\newblock A characterization of the matroids representable over {GF}(3) and the
  rationals.
\newblock {\em Journal of Combinatorial Theory, Series B}, 65:222--261, 1995.

\end{thebibliography}

\end{document}